\documentclass{amsart}
\usepackage{graphicx}
\usepackage{amssymb}

\newtheorem{thm}{Theorem}[section]

\newtheorem{lem}[thm]{Lemma}
\newtheorem{prop}[thm]{Proposition}
\theoremstyle{definition}

\theoremstyle{remark}
\newtheorem{rem}[thm]{Remark}

\numberwithin{equation}{section}


\begin{document}

\title[Obstacle Problem]{An obstacle problem for conical deformations\\ of thin elastic sheets}
\author{Alessio Figalli}
\address{Department of Mathematics, ETH Z\"{u}rich, R\"{a}mistrasse 101, 8092 Z\"{u}rich, Switzerland}
\email{\tt alessio.figalli@math.ethz.ch}
\author{Connor Mooney}
\address{Department of Mathematics, ETH Z\"{u}rich, R\"{a}mistrasse 101, 8092 Z\"{u}rich, Switzerland}
\email{\tt connor.mooney@math.ethz.ch}


\begin{abstract}
A developable cone (``d-cone") is the shape made by an elastic sheet when it is pressed at its center into a hollow cylinder by a distance $\epsilon$.
Starting from a nonlinear model depending on the thickness $h > 0$ of the sheet, we prove a $\Gamma$-convergence result as $h \rightarrow 0$ to a fourth-order obstacle
problem for curves in $\mathbb{S}^2$. We then describe the exact shape of minimizers of the limit problem when $\epsilon$ is small. In particular, we rigorously justify previous results in the physics literature.
\end{abstract}

\maketitle

\section{Introduction}
If a thin elastic sheet is placed on top of a hollow cylinder and pressed down by a distance $\epsilon$ in the center, then
the resulting shape is roughly a developable cone (known as ``d-cone''). Experiments show that the sheet lifts from the cylinder on one single region (that is, it has ``one fold''), and that the angle subtended by
this fold is independent of $\epsilon$ for $\epsilon$ small (see \cite{CM}). In this paper we give a mathematically rigorous justification of these observations. 

We start with a nonlinear model depending on the thickness $h > 0$ of the sheet. We first prove a $\Gamma$-convergence result
as $h \rightarrow 0$ to a fourth-order obstacle problem for unit-speed curves in $\mathbb{S}^2$.
We then show that, for $\epsilon$ small, the minimizers of the one-dimensional obstacle problem lift from the obstacle on exactly one interval, and we give a precise estimate for the length of this interval. We describe our results
in more detail below.

\vspace{3mm}

We model the elastic energy of a sheet of thickness $h$ by
$$E_h({\bf u}) := h^2 \int_{B_1} |D^2{\bf u}|^2 \,dx + \int_{B_1} \text{dist}^2(D{\bf u},\, O(2,\,3))\,dx,$$
where ${\bf u}: B_1 \subset \mathbb{R}^2 \rightarrow \mathbb{R}^3$ is a $W^{2,2}$ map, and $O(2,\,3) = \{P \in M^{3 \times 2}: P^TP = I_{2 \times 2}\}$.
We restrict our attention to maps satisfying the conditions ${\bf u}(0) = 0$ and ${\bf u}|_{\mathbb{S}^1} \in \mathbb{S}^2$ (here and in the sequel, $\mathbb S^1=\partial B_1$ is the unit circle in $\mathbb R^2$). It is well-known that the minimizers of $E_h$ with these boundary conditions have the energy scaling $E_h \sim h^2|\log h|$
(\cite{BKN}, \cite{MO}). It is thus natural to consider the normalized energy
\begin{equation}\label{NormalizedEnergy}
\overline{E}_h := \frac{1}{h^2|\log h|}E_h.
\end{equation}

Our first result is that, subject to the above conditions,
the functionals $\overline{E}_h$ $\Gamma$-converge as $h \rightarrow 0$ to a limit functional on one-homogeneous isometries with $W^{2,2}$ boundary data. More precisely, let
$$X := \{{\bf u} \in W^{1,4}(B_1; \,\mathbb{R}^3): {\bf u}(0) = 0 \text{ and } {\bf u}|_{\mathbb{S}^1} \in \mathbb{S}^2\}$$
equipped with the $W^{1,4}$ norm. Let $\overline{E}_h$ the the functional on $X$ defined by (\ref{NormalizedEnergy}) when ${\bf u} \in X \cap W^{2,2}(B_1)$, and by $\overline{E}_h = +\infty$ otherwise.
Furthermore, define the functional $\overline{E}_0$ on $X$ by
\begin{equation}\label{LimitFunctional}
\overline{E}_0({\bf u}) :=\left\{ \begin{array}{ll}
\frac{1}{\log 2}\int_{B_1 \backslash B_{1/2}} |D^2{\bf u}|^2\,dx,\, &\text{if }{\bf u} \in X \cap W^{2,2}(B_1 \backslash B_{1/2})\\
& \quad\text{ is a one-homogeneous isometry}, \\
+\infty, &\text{otherwise}.
\end{array}
\right.
\end{equation}

The first main result is:
\begin{thm}\label{MainTheorem1}
The functionals $\overline{E}_h$ $\Gamma$-converge on $X$ to the functional $\overline{E}_0$.
\end{thm}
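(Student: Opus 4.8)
The plan is to establish the two $\Gamma$-convergence inequalities separately; both directions rest on the polar-coordinate identity that for a one-homogeneous map $\mathbf{u}$, writing $\gamma:=\mathbf{u}|_{\mathbb{S}^1}$, one has $|D^2\mathbf{u}(r\omega)|^2=|\partial_{rr}\mathbf{u}|^2+2|\tfrac1r\partial_{r\theta}\mathbf{u}-\tfrac1{r^2}\partial_\theta\mathbf{u}|^2+|\tfrac1r\partial_r\mathbf{u}+\tfrac1{r^2}\partial_{\theta\theta}\mathbf{u}|^2=r^{-2}|\gamma+\gamma''|^2$, hence $\int_{B_1\setminus B_\rho}|D^2\mathbf{u}|^2=|\log\rho|\int_{\mathbb{S}^1}|\gamma+\gamma''|^2\,d\theta$ for every $\rho\in(0,1)$ and in particular $\overline{E}_0(\mathbf{u})=\int_{\mathbb{S}^1}|\gamma+\gamma''|^2\,d\theta$. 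For the \emph{recovery sequence}, given $\mathbf{u}\in X$ we may assume $\overline{E}_0(\mathbf{u})<\infty$, so $\mathbf{u}(r\omega)=r\gamma(\omega)$ with $\gamma\in W^{2,2}(\mathbb{S}^1;\mathbb{S}^2)$ unit-speed. I fix the profile $\mathbf{G}(r\omega):=(2r^2-r^3)\gamma(\omega)$, which lies in $W^{2,2}(B_1;\mathbb{R}^3)$, vanishes at the origin, and shares the $1$-jet of the cone on $\mathbb{S}^1$ (that is, $\mathbf{G}=\gamma$ and $\partial_r\mathbf{G}=\gamma$ there), and set $\mathbf{u}_h:=\mathbf{u}$ on $B_1\setminus B_h$ and $\mathbf{u}_h(x):=h\,\mathbf{G}(x/h)$ on $B_h$. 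The traces of $\mathbf{u}_h$ and of $D\mathbf{u}_h$ match across $\partial B_h$ — the latter because $D\mathbf{u}$ is $0$-homogeneous and coincides with $D\mathbf{G}$ on $\mathbb{S}^1$ — so $\mathbf{u}_h\in X\cap W^{2,2}(B_1)$. Since $\mathbf{u}_h$ is an exact isometry outside $B_h$ and $O(2,3)$ is compact, scaling gives $\int_{B_1}\mathrm{dist}^2(D\mathbf{u}_h,O(2,3))=h^2\int_{B_1}\mathrm{dist}^2(D\mathbf{G},O(2,3))=O(h^2)$ and $\int_{B_1}|D^2\mathbf{u}_h|^2=|\log h|\,\overline{E}_0(\mathbf{u})+\int_{B_1}|D^2\mathbf{G}|^2=|\log h|\,\overline{E}_0(\mathbf{u})+O(1)$, whence $\overline{E}_h(\mathbf{u}_h)\to\overline{E}_0(\mathbf{u})$; and $\mathbf{u}_h\to\mathbf{u}$ in $W^{1,4}$ because they differ only on $B_h$, where both are $o(1)$ (using $|D\mathbf{u}|\equiv\sqrt2$ and $W^{2,2}(B_1)\hookrightarrow W^{1,4}(B_1)$). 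The case $\overline{E}_0(\mathbf{u})=+\infty$ is vacuous.

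For the \emph{liminf inequality}, let $\mathbf{u}_h\to\mathbf{u}$ in $W^{1,4}$ with $\liminf_h\overline{E}_h(\mathbf{u}_h)<\infty$; after passing to a subsequence, $\int_{B_1}|D^2\mathbf{u}_h|^2\leq C|\log h|$ and $\int_{B_1}\mathrm{dist}^2(D\mathbf{u}_h,O(2,3))\to0$, and since $D\mathbf{u}_h\to D\mathbf{u}$ in $L^2$ this forces $D\mathbf{u}^TD\mathbf{u}=I$ a.e. I would then prove the rigidity lemma that \emph{every $\mathbf{u}\in X$ with $D\mathbf{u}^TD\mathbf{u}=I$ a.e.\ is one-homogeneous}: for a.e.\ $\omega$ the radial curve $t\mapsto\mathbf{u}(t\omega)$ has unit speed and joins $\mathbf{u}(0)=0$ to $\mathbf{u}(\omega)\in\mathbb{S}^2$, so its length equals the distance between its endpoints and it must be the straight segment, i.e.\ $\mathbf{u}(t\omega)=t\,\mathbf{u}(\omega)$; thus $\mathbf{u}(r\omega)=r\gamma(\omega)$ with $\gamma:=\mathbf{u}|_{\mathbb{S}^1}$ unit-speed in $\mathbb{S}^2$. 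It remains to prove $\liminf_h\overline{E}_h(\mathbf{u}_h)\geq\overline{E}_0(\mathbf{u})$, the right-hand side read as $+\infty$ when $\gamma\notin W^{2,2}$.

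Dropping the $\mathrm{dist}^2$ term, it suffices to show $\liminf_h|\log h|^{-1}\int_{B_1}|D^2\mathbf{u}_h|^2\geq\overline{E}_0(\mathbf{u})$. The cone's second-derivative energy is logarithmically equidistributed, $\int_{B_{2^{-k}}\setminus B_{2^{-k-1}}}|D^2\mathbf{u}|^2=(\log2)\,\overline{E}_0(\mathbf{u})$ for all $k$, so I would slice $B_1\setminus B_h$ into the $\sim|\log h|/\log2$ dyadic annuli $A_k=B_{2^{-k}}\setminus B_{2^{-k-1}}$, rescale $A_k$ onto $A:=B_1\setminus B_{1/2}$ via $\mathbf{v}_{h,k}(y):=2^k\mathbf{u}_h(2^{-k}y)$ (so $\int_A|D^2\mathbf{v}_{h,k}|^2=\int_{A_k}|D^2\mathbf{u}_h|^2$, while one-homogeneity of the limit gives $2^k\mathbf{u}(2^{-k}\,\cdot\,)=\mathbf{u}$ on $A$), and bound each $\int_A|D^2\mathbf{v}_{h,k}|^2$ below by a quantity tending to $(\log2)\,\overline{E}_0(\mathbf{u})$: via the polar identity $|D^2\mathbf{v}_{h,k}|^2\geq|\tfrac1r\partial_r\mathbf{v}_{h,k}+\tfrac1{r^2}\partial_{\theta\theta}\mathbf{v}_{h,k}|^2$, the one-dimensional weak lower semicontinuity of $c\mapsto\int|c+c''|^2$ along the circular slices $\mathbf{v}_{h,k}|_{\partial B_r}$, and the scale-by-scale smallness of $\int_A\mathrm{dist}^2(D\mathbf{v}_{h,k},O(2,3))$ (which pins the slices to arc-length curves on spheres of the right radii), one obtains $\int_A|D^2\mathbf{v}_{h,k}|^2\geq(\log2)(\overline{E}_0(\mathbf{u})-\varepsilon_{h,k})$; summing over $k$ and dividing by $|\log h|$ gives the claim, and en route that $\gamma\in W^{2,2}$.

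The difficulty — and the main obstacle — is producing the \emph{sharp} constant $\tfrac1{\log2}$, which allows discarding at most $o(|\log h|)$ of the dyadic scales. Since no rate is available for $\mathbf{u}_h\to\mathbf{u}$ in $W^{1,4}$, the rescalings $\mathbf{v}_{h,k}$ need not converge to the cone at fine scales, and the errors $\varepsilon_{h,k}$ — which contain $2^{2k}\int_{A_k}\mathrm{dist}^2(D\mathbf{u}_h,O(2,3))$, whose weight $2^{2k}$ can be as large as $h^{-2}$ — must be controlled on all but a negligible fraction of scales. I expect the resolution to require a genuine \emph{quantitative rigidity} input: that the bound $E_h(\mathbf{u}_h)\leq Ch^2|\log h|$ together with the constraints $\mathbf{u}_h(0)=0$ and $\mathbf{u}_h|_{\mathbb{S}^1}\in\mathbb{S}^2$ already forces $\mathbf{u}_h$ to be uniformly close to a cone across every dyadic scale above $h$, so that the total loss from the bad scales is $o(|\log h|)$. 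That estimate — not the explicit recovery construction, nor the short homogeneity lemma — is where the real work lies.
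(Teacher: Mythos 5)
You have the recovery sequence essentially right: smoothing the cone on $B_h$ with the profile $(2r^2-r^3)\gamma(\omega)$, computing the energy by scaling, and using the polar-coordinate identity $|D^2(r\gamma)|^2 = r^{-2}|\gamma+\gamma''|^2$ matches the paper's construction (the paper uses a smooth profile $hf(r/h)\gamma$ with $f(s)=s^2$ near $0$ and $|s|$ for $|s|\geq1$, but the estimates are the same). Your rigidity lemma identifying the $W^{1,4}$ limit as a one-homogeneous isometry is also correct. The problem is the liminf inequality, and you have correctly diagnosed, but not closed, the gap.

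You explicitly state that your dyadic-annulus plan requires a ``genuine quantitative rigidity input'' — that bounded normalized energy forces $\mathbf{u}^h$ to be close to a cone at all but $o(|\log h|)$ scales — and that you do not prove it. This is exactly the missing step, and it is the technical core of the theorem. The paper supplies it through the chain Lemma \ref{StretchingInequality} $\to$ Lemma \ref{OscillationEstimate} $\to$ Proposition \ref{GeometricEstimate}: using only the bound on the stretching energy and the constraints $\mathbf{u}(0)=0$, $\mathbf{u}|_{\mathbb{S}^1}\in\mathbb{S}^2$, one gets the two quantitative estimates (\ref{H1Estimate}) and (\ref{L2Estimate}), which show that $\mathbf{u}^h$ is $L^2$-close (on circles) to the cone over \emph{its own} boundary data $\gamma^h$, and that the radial derivative matches $\gamma^h$ on average, at every scale $r\gg h|\log h|$. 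Without a result of this type your $\varepsilon_{h,k}$ are uncontrolled, precisely as you say.

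There is a second difficulty you did not flag, and a corresponding difference of mechanism. Even with quantitative rigidity in hand, the boundary trace $\gamma^h$ need not be bounded in $W^{2,2}(\mathbb{S}^1)$ (the paper exhibits the counterexample $\mathbf{w}^h$ with $\|\mathbf{w}^h\|_{W^{2,2}(\mathbb{S}^1)}\sim|\log h|^{1/2}$), so you cannot extract the limit curve by compactness of the boundary data. The paper avoids both issues at once by \emph{not} summing over dyadic annuli: it discards a small bad set $R_h$ of radii using (\ref{H1Estimate}), observes $\int_{[h|\log h|^2,1]\setminus R_h}\tfrac{dr}{r}=(1+o(1))|\log h|$, and selects a \emph{single} good radius $r_h$ at which the circular energy density is nearly minimal. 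The rescaled slice $\bar\gamma^h:=r_h^{-1}\mathbf{u}^h(r_h,\cdot)$ is then bounded in $W^{2,2}(\mathbb{S}^1)$, $L^2$-close to $\gamma^h$ by (\ref{L2Estimate}), and nearly unit-speed by (\ref{IsometryControl}); weak $W^{2,2}$ lower semicontinuity of $\int|\bar\gamma^h_{\theta\theta}+\bar\gamma^h|^2$ on that single slice gives the result with the sharp constant. Your dyadic decomposition is not wrong in spirit, but the single-slice selection is cleaner and sidesteps having to control errors uniformly across $O(|\log h|)$ scales. In short: the recovery half is fine, the liminf half is an outline of the problem rather than a proof.
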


\begin{rem}
The pointwise convergence of $\overline{E}_h$
to $\overline{E}_0$ for fixed unit-speed Dirichlet data in $C^k(\mathbb{S}^1)$ for $k = 2$ or $k = 3$ follows from the estimates
in \cite{BKN} and \cite{MO}.
Here, to prove our $\Gamma$-convergence result, we only have the information ${\bf u}|_{\mathbb{S}^1} \in \mathbb{S}^2$.
\end{rem}

The main difficulty in the proof of Theorem \ref{MainTheorem1} is that the normalized energy does not control the $W^{2,2}$ norm of the boundary data. To overcome this,
we prove some geometric estimates to show that certain Lipschitz rescalings of maps with bounded normalized energy 
have boundary data that are bounded in $W^{2,2}$, and are close to the original data in $L^2$ (see Section \ref{Gamma_Convergence}).

\vspace{3mm}
To model a thin elastic sheet placed on top of a hollow cylinder and pressed down by a distance $\epsilon$ in the center,
we introduce the obstacle
$$O_{\epsilon} := \{x_1^2 + x_2^2 > 1-\epsilon^2\} \cap \{x_3 < \epsilon\},$$ 
and we let $X_{\epsilon} = X \cap \{{\bf u}(B_1) \cap O_{\epsilon} = \emptyset\}$ equipped with the $W^{1,4}$ norm. 
Define $\overline{E}_h$ and $\overline{E}_0$ on $X_{\epsilon}$ as above. The existence of minimizers of $\overline{E}_h$ in $X_{\epsilon} \cap W^{2,2}(B_1)$
and of $\overline{E}_0$ in $X_{\epsilon} \cap W^{2,2}(B_1 \backslash B_{1/2})$ is an easy consequence of the direct method in the Calculus of Variations.
A corollary of the proof of Theorem \ref{MainTheorem1} is that the functionals $\overline{E}_h$ on $X_{\epsilon}$ $\Gamma$-converge to $\overline{E}_0$ (see Remark \ref{ObstacleGammaConvergence}). 
In particular, if $h_k \rightarrow 0$ and minimizers ${\bf u}^{h_k}$ of $\overline{E}_{h_k}$ converge in $X_{\epsilon}$ to ${\bf u}^0$, then ${\bf u}^0$ is a minimizer in $X_{\epsilon}$ of $\overline{E}_0$.

\vspace{3mm}

Our second result is a precise description of the minimizers of $\overline{E}_0$ in $X_{\epsilon}$, for all $\epsilon$ small. If ${\bf u}$ is one such minimizer, then ${\bf u}|_{\mathbb{S}^1}$
is a unit-speed curve $\gamma \in W^{2,2}(\mathbb{S}^1; \,\mathbb{S}^2)$ with image in $\{x_3 \geq \epsilon\}$. Furthermore, $\overline{E}_0({\bf u}) = \int_{\mathbb{S}^1} \kappa^2\,ds$, where $\kappa$ is 
the geodesic curvature of $\gamma$. Thus, the problem of minimizing $\overline{E}_0$ in $X_{\epsilon}$ is equivalent to the fourth-order obstacle problem of minimizing
$$F(\gamma) := \int_{\mathbb{S}^1} \kappa^2\,ds$$
over $Y_{\epsilon} := \{\gamma \in W^{2,2}(\mathbb{S}^1; \,\mathbb{S}^2): |\gamma'| \equiv 1,\, \gamma \subset \{x_3 \geq \epsilon\}\}$.

\vspace{3mm}

Let $\gamma_{\epsilon}$ be a minimizer of $F$ in $Y_{\epsilon}$. Let $\theta$ denote the angular variable in cylindrical coordinates, with axis of symmetry in the $e_3$ direction. Our second result is:

\begin{thm}\label{MainTheorem2}
There exist $\epsilon_0 > 0$ small and $C$ universal such that for all $\epsilon < \epsilon_0$, we have
$$
\|\gamma_{\epsilon} \cdot e_3\|_{C^{2,1}(\mathbb{S}^1)} \leq C\epsilon,
$$
and $\gamma_{\epsilon}$ lifts from the obstacle $\{x_3 = \epsilon\} \cap \mathbb{S}^2$ on exactly one interval.

More precisely, $\gamma_{\epsilon}$ can be parametrized as
\begin{equation}
\label{eq:C21 eps}
\gamma_{\epsilon} = \{(1-\alpha(\theta)^2)^{1/2}(\cos(\theta) e_1 + \sin(\theta) e_2) + \alpha(\theta) e_3,\, \theta \in \mathbb{S}^1\},
\quad \|\alpha\|_{C^{2,1}(\mathbb{S}^1)} \leq C\epsilon,
\end{equation}
where $\alpha(\theta) = \epsilon$ on $\mathbb{S}^1 \backslash I_{\epsilon}$, where $I_{\epsilon}$ is an open interval satisfying $|I_{\epsilon}|\to \ell_0$ as $\epsilon \to 0$,
where $\ell_0 \in (2.42,2.43)$ is uniquely characterized. 
\end{thm}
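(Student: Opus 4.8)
The plan is to analyze the one-dimensional obstacle problem of minimizing $F(\gamma)=\int_{\mathbb S^1}\kappa^2\,ds$ over $Y_\epsilon$ by first proving a quantitative energy bound, then extracting the limiting ``free'' problem as $\epsilon\to 0$, and finally bootstrapping regularity of $\gamma_\epsilon$ from the Euler--Lagrange relation. The starting point is an upper bound: construct an explicit competitor in $Y_\epsilon$ (a unit-speed curve on $\mathbb S^2$ that sits on the circle $\{x_3=\epsilon\}$ outside an interval of fixed length $\ell_0$, and makes a small ``bump'' of height comparable to $\epsilon$ inside) to get $F(\gamma_\epsilon)\le C\epsilon^2$. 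This immediately forces $\gamma_\epsilon$ to be $C\epsilon$-close to the equator in a suitable norm: writing $\alpha=\gamma_\epsilon\cdot e_3$ and using that the unit-speed constraint together with $\int\kappa^2\le C\epsilon^2$ controls $\alpha$ in $W^{2,2}$, one gets $\|\alpha\|_{W^{2,2}}\le C\epsilon$, hence $\|\alpha\|_{C^{1,1/2}}\le C\epsilon$ by Sobolev embedding. In particular, $\gamma_\epsilon$ admits a graph parametrization as in \eqref{eq:C21 eps} for $\epsilon$ small, with $1-\alpha^2$ bounded away from $0$, so all the curvature computations can be carried out in the $(\theta,\alpha)$ coordinates.

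Next I would write the Euler--Lagrange conditions for the obstacle problem. On the coincidence set $\{\alpha=\epsilon\}$ the curve is fixed; on the (open) non-coincidence set it is a critical point of $\int\kappa^2\,ds$ among unit-speed curves in $\mathbb S^2$, i.e.\ an \emph{elastica} in $\mathbb S^2$, which satisfies a fourth-order ODE. The obstacle reaction (a nonnegative measure supported on $\{\alpha=\epsilon\}$, pushing upward) and the natural $C^2$ matching at the endpoints of the non-coincidence set give the boundary conditions. Linearizing this system around the equator — legitimate because $\alpha=O(\epsilon)$ — the leading-order equation for $v:=\alpha/\epsilon$ becomes a constant-coefficient fourth-order ODE of the form $v''''+v''+(\text{const})v=(\text{lower order})$ on the free interval, with $v=v'=v''=0$ at the endpoints (clamped conditions from $C^2$ matching) and an integral constraint coming from the closedness of $\gamma_\epsilon$ and the unit-speed normalization. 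This is where $\ell_0$ enters: the linearized problem has a nontrivial clamped solution precisely when the interval length equals a specific eigenvalue-type threshold, and a short numerical/analytic check identifies it in $(2.42,2.43)$ and shows it is the unique minimizing length. I would then show, by a perturbation/implicit-function argument, that the true non-coincidence interval $I_\epsilon$ has $|I_\epsilon|\to\ell_0$.

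To upgrade to $C^{2,1}$: on the free interval $\gamma_\epsilon$ solves a smooth ODE, so it is smooth there; on the interior of the coincidence set $\alpha\equiv\epsilon$ is trivially smooth; the only issue is across the two contact points. There, the standard obstacle-problem regularity heuristic (the solution is exactly as regular as the problem allows before the reaction measure becomes singular) predicts $C^{2,1}$ but not $C^3$: the third derivative of $\alpha$ jumps by an amount proportional to the density of the reaction measure, which is itself $O(\epsilon)$. Making this rigorous requires showing that the contact is of the right order — i.e.\ that $\gamma_\epsilon$ detaches from $\{x_3=\epsilon\}$ quadratically, $\alpha(\theta)-\epsilon\sim c(\theta-\theta_0)^2$ near each endpoint $\theta_0$ of $I_\epsilon$ — which follows from the clamped boundary conditions and the sign of the reaction. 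Combining the interior smooth estimates with this controlled jump across the two points yields $\|\alpha\|_{C^{2,1}(\mathbb S^1)}\le C\epsilon$, and the ``exactly one interval'' assertion follows from the linearized analysis: at leading order any minimizer of the limit problem has a single free arc of length $\ell_0$, and more arcs would cost strictly more energy, so for $\epsilon$ small $\gamma_\epsilon$ can have only one.

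The main obstacle I anticipate is the \emph{characterization and uniqueness of $\ell_0$}, i.e.\ correctly identifying the limiting free problem and showing that a single arc of length exactly $\ell_0$ is the unique optimal configuration. This is delicate because it is a genuinely nonconvex variational problem on curves in $\mathbb S^2$ with a unit-speed constraint: one must rule out competitors with several short free arcs, control the interaction between the Lagrange multiplier for $|\gamma'|\equiv 1$ and the closedness constraint $\oint\gamma=\text{(base point)}$, and verify the strict second-variation (stability) inequality at the candidate minimizer so that the implicit-function step producing $|I_\epsilon|\to\ell_0$ goes through. The rest — the a priori $O(\epsilon)$ bounds, the graph parametrization, and the $C^{2,1}$ bootstrap — is technical but follows standard obstacle-problem and elastica techniques.
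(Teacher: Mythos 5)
Your high-level plan — energy competitor giving $F(\gamma_\epsilon)\le C\epsilon^2$, graph parametrization for small $\epsilon$, Euler--Lagrange with a reaction measure, linearization to a constant-coefficient fourth-order free-boundary problem whose solvability fixes the length $\ell_0$, then passing the one-fold structure back to the $\epsilon>0$ problem — is the right shape, and matches the paper's strategy in outline. But there are two genuine gaps where the proposal leans on heuristics that do not go through without substantial extra work.

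The first and most serious gap concerns the uniform bound $\|\alpha\|_{C^{2,1}}\le C\epsilon$. From $\int\kappa^2\le C\epsilon^2$ you correctly get $\|\alpha\|_{W^{2,2}}\le C\epsilon$ and hence $C^{1,1/2}$ control, but you then jump to ``$C^{2,1}$ by the standard obstacle-problem heuristic.'' This misses the decisive difficulty: the elastica ODE on the free region is $\kappa''+((1+\lambda)+\kappa^2/2)\kappa=0$, where $\lambda$ is the Lagrange multiplier for the length constraint, and a priori there is no bound on $\Lambda:=\sqrt{1+\lambda}$. If $\Lambda$ were very large, the ODE could force rapid oscillation of $\kappa$ at scale $\Lambda^{-1}$, making $\kappa'$ large even though $\int\kappa^2$ is small — so the ``obstacle-problem regularity heuristic'' cannot by itself yield $\|\kappa\|_{C^{0,1}}\le C\epsilon$. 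The bulk of the paper's proof of the $C^{2,1}$ estimate (Steps 2--8) is devoted exactly to pinning down $\Lambda$: a lower bound $\Lambda\ge\tfrac1{2\pi}$ via concavity of $|\kappa|$ and energy minimality, the quantity $\Lambda^{-1}\max|\kappa|\le C\epsilon$, and then — via a delicate contradiction argument using the conserved quantity $\kappa'^2+\Lambda^2\kappa^2+\kappa^4/4$ and the constraint $h\ge\epsilon$ — the upper bound $\Lambda\le C$. Only with $\Lambda$ bounded above and below does the ODE give $|\kappa|,|\kappa'|\le C\epsilon$ and hence $\|\alpha\|_{C^{2,1}}\le C\epsilon$ via $h''+h=\kappa N\cdot e_3$. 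Your proposal relegates ``the interaction with the Lagrange multiplier'' to a closing remark, but it is the crux of the whole $C^{2,1}$ estimate and cannot be deferred.

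The second gap is in passing from the linear problem to the conclusion that $\gamma_\epsilon$ has exactly one fold. You invoke an implicit-function/perturbation argument and the observation that in the linear problem two or more arcs cost more energy. But for the nonlinear problem with $\epsilon>0$, a priori there could be additional very short separation intervals that vanish in the $C^2$ limit and hence are invisible to the linearized analysis. The paper rules this out with a separate, purely geometric argument (Lemmas 3.2--3.3 and Proposition 3.4): short separation intervals would force $\kappa>\epsilon(1-\epsilon^2)^{-1/2}$ throughout, while a comparison circle on $\mathbb{S}^2$ touching from above forces $\kappa\le\epsilon(1-\epsilon^2)^{-1/2}$ somewhere in any arc subtending angle $<\pi$ — a contradiction. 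This yields a universal lower bound $|I_i|\ge c_0$ on all separation intervals, which is what makes the $C^2$-compactness argument conclusive. Without this step, the ``more arcs cost more'' statement in the linearized problem does not suffice. Relatedly, you should also note that your linearized boundary data at the free-boundary points are not quite ``$v=v'=v''=0$'': after rescaling, $v=1$, $v'=0$, $v''=0$, and by $C^{2,1}$ continuity of $\kappa$ and $\kappa'=0$ on the contact set also $v'''=0$; it is precisely this overdetermination of a fourth-order ODE that forces the transcendental matching relation $\frac{\tan(\Lambda s_0)}{\Lambda s_0}=\frac{\tan s_0}{s_0}$ and, together with the closedness constraint, determines $(s_0,\Lambda)$ and hence $\ell_0$.
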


The idea behind the proof of the result is to study a linearized obstacle problem for graphs over $\mathbb{S}^1$, obtained by ``stretching the picture vertically'' by the factor $\epsilon^{-1}$. Using analytic techniques we characterize
the minimizers of this linear problem as functions that lift from the obstacle (the constant function $1$) on exactly one interval, with a precise estimate for the length of this interval. To show that this behavior
passes to the minimizers of the nonlinear problem for $\epsilon$ small, we need some compactness. This is provided by the $C^{2,1}$ estimate  \eqref{eq:C21 eps}, which is uniform in $\epsilon$. 
This $C^{2,1}$ estimate comes from a careful analysis combining the Euler-Lagrange ODE with energy minimality. 

\begin{rem}
In \cite{O}, the $\Gamma$-convergence as $\epsilon \rightarrow 0$ of the ``vertically stretched'' nonlinear obstacle problem to the linearized problem is established, and minimizers
of the linear problem are studied. In contrast, Theorem \ref{MainTheorem2} describes the exact shape of minimizers for the nonlinear obstacle problem for all $\epsilon$ small. With respect to \cite{O}, the new contributions
of this paper are:\\
- a sharper characterization of minimizers of the linear problem as functions which lift from the obstacle on exactly one region;\\
- the $C^{2,1}$ estimate  \eqref{eq:C21 eps} and a uniform lower bound on the separation regions (see Proposition \ref{BumpLength}),
which allow us to pass this result to the minimizers of the nonlinear problem.
\end{rem}

\begin{rem}
Although $\gamma_{\epsilon}$ is contained in the thin strip $\{\epsilon \leq x_3 \leq C\epsilon\} \cap \mathbb{S}^2$ (so after ``stretching vertically'' we obtain graphs on a cylinder), 
the curvature of $\mathbb{S}^2$ plays an important role in making $\gamma_{\epsilon}$ stick to the obstacle.
Indeed, due to this constraint, the contributions of the height $\gamma_{\epsilon} \cdot e_3$ and its second derivative $(\gamma_{\epsilon} \cdot e_3)''$ to the curvature of $\gamma_{\epsilon}$ are of the same order for all 
$\epsilon$ small (see Section \ref{MinimizerDescription}).
\end{rem}

\vspace{3mm}

The paper is organized as follows. In Section \ref{Gamma_Convergence} we establish some preliminary geometric estimates, and use them to prove Theorem \ref{MainTheorem1}. In Section \ref{MinimizerDescription} we
prove Theorem \ref{MainTheorem2}, in several steps. We first derive the Euler-Lagrange equation for $\gamma_{\epsilon}$, and we show that $\gamma_{\epsilon} \in C^{2,1}$.
We then prove the bound $\|\gamma_{\epsilon} \cdot e_3\|_{C^{2,1}(\mathbb{S}^1)} \leq C\epsilon$. Next we describe minimizers to the linearized problem. Finally, we combine this analysis with the $C^{2,1}$ estimate to prove Theorem \ref{MainTheorem2}.
In the Appendix we collect some calculations and results from functional analysis used to derive the Euler-Lagrange equation.

\section{$\Gamma$-Convergence}\label{Gamma_Convergence}

In this section we prove Theorem \ref{MainTheorem1}. We first establish some geometric estimates for maps with bounded normalized energy.

\vspace{3mm}

\subsection{Geometric Estimates}
Let ${\bf u}$ be a $W^{2,2}$ map such that ${\bf u}(0) = 0$ and ${\bf u}$ has boundary data
$$\gamma(\theta) := {\bf u}(1,\,\theta) \in \mathbb{S}^2.$$
(Here and below we use standard polar coordinates $(r,\,\theta) \in [0,\infty)\times \mathbb S^1$). Let
$${\bf v}(r,\,\theta):=  r\gamma(\theta)$$
be the cone over the boundary values. Finally, let
$${\bf e} := {\bf u} - {\bf v}.$$
Here and in the following, we shall use subscripts to denote partial derivatives (for instance ${\bf u}_r=\partial_r {\bf u}$).

The key estimate is the following:

\begin{prop}\label{GeometricEstimate}
Let ${\bf u}$ be as above, and assume for some $C_0 \geq 1$ that $\overline{E}_h({\bf u}) \leq C_0$. Then
\begin{equation}\label{H1Estimate}
\int_{B_1 \backslash B_h} \frac{|{\bf e}_r|^2}{r}\,dx \leq 3 C_0^{1/2} h|\log h|,
\end{equation}
and furthermore
\begin{equation}\label{L2Estimate}
\left(\frac{1}{r} \int_{\partial B_r} |{\bf e}|^2\,ds\right)^{1/2} \le 3 C_0^{1/4} \left(\frac{h|\log h|}{r}\right)^{1/2}r
\end{equation}
for all $h \leq h_0$ universal and $r \geq h$.
\end{prop}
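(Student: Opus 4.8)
The plan is to exploit energy minimality-type information encoded in the bound $\overline E_h(\mathbf u)\le C_0$, which says precisely that $h^2\int_{B_1}|D^2\mathbf u|^2 + \int_{B_1}\mathrm{dist}^2(D\mathbf u, O(2,3)) \le C_0 h^2|\log h|$. The key point is that $\mathbf v(r,\theta)=r\gamma(\theta)$ is, away from the origin, the ``best cone'' approximation, and $\mathbf e=\mathbf u-\mathbf v$ should be small. First I would compute $D\mathbf u$ and $D\mathbf v$ in polar coordinates: writing $D\mathbf u = \mathbf u_r\otimes e_r + r^{-1}\mathbf u_\theta\otimes e_\theta$, and similarly for $\mathbf v$ with $\mathbf v_r=\gamma$, $r^{-1}\mathbf v_\theta = \gamma'$. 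Since $|\gamma|=1$ we have $|\gamma'|^2 = 1 - (\text{something})$... more carefully, $\gamma\cdot\gamma'=0$ so the columns $\gamma,\gamma'$ of $D\mathbf v$ are orthogonal, with $|\gamma|=1$; the isometry defect of $D\mathbf v$ is thus governed by $1-|\gamma'|^2$. The crucial structural observation is that the radial column of $D\mathbf u$ is $\mathbf u_r$, and testing the constraint $\mathrm{dist}^2(D\mathbf u,O(2,3))$ against the fact that for any $P\in O(2,3)$ the columns are unit and orthogonal, one gets pointwise that $|\,|\mathbf u_r|^2 - 1\,|$ and $|\mathbf u_r\cdot (r^{-1}\mathbf u_\theta)|$ are each $\lesssim \mathrm{dist}(D\mathbf u,O(2,3))$ (up to the size of $D\mathbf u$, which one controls by splitting into a region where $\mathrm{dist}$ is small and its complement).

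The main step is to derive \eqref{H1Estimate}. Here is the mechanism I expect: integrate $\mathbf u_r$ along rays. Since $\mathbf u(0)=0$, we have $\mathbf u(r,\theta)=\int_0^r \mathbf u_r(t,\theta)\,dt$, while $\mathbf v(r,\theta)=r\gamma(\theta)$, so $\mathbf e_r = \mathbf u_r - \gamma$. Thus $\int_{B_1\setminus B_h}\frac{|\mathbf e_r|^2}{r}\,dx = \int_{\mathbb S^1}\int_h^1 |\mathbf u_r - \gamma|^2\,dr\,d\theta$. To bound $|\mathbf u_r-\gamma|$ I would use that $|\mathbf u_r|\approx 1$ (from the distance term) together with a comparison argument: the radial column of $D\mathbf u$ at $r=1$ limits to $\gamma$ in an averaged sense because $\mathbf u|_{\mathbb S^1}=\gamma$ forces $\mathbf u_\theta(1,\theta)=\gamma'$, and the Hessian term $h^2\int|D^2\mathbf u|^2$ controls how fast $\mathbf u_r$ can vary in $\theta$. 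Concretely, I anticipate writing $\mathbf u_r(r,\theta) - \gamma(\theta)$ as an integral of $\mathbf u_{r\theta}$ or $\mathbf u_{rr}$ and using Cauchy–Schwarz to trade one factor of the length of integration against $\left(h^2\int|D^2\mathbf u|^2\right)^{1/2}\le C_0^{1/2}h|\log h|^{1/2}$, and the isometry defect to handle the ``length $1$'' part, producing the factor $h|\log h|$ with the constant $3C_0^{1/2}$. The logarithm should enter through the scale-invariance of the radial integral: contributions from dyadic annuli $B_{2^{-k}}\setminus B_{2^{-k-1}}$ down to scale $h$ each contribute comparably, giving $|\log h|$ annuli.

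For \eqref{L2Estimate}, I would use \eqref{H1Estimate} as input via a one-dimensional Poincaré/fundamental-theorem-of-calculus estimate along rays. Fix $\theta$ and note $\mathbf e(r,\theta) = \int_0^r \mathbf e_r(t,\theta)\,dt$ (using $\mathbf e(0,\theta)=0$), so by Cauchy–Schwarz $|\mathbf e(r,\theta)|^2 \le \left(\int_0^r t\,dt\right)\left(\int_0^r \frac{|\mathbf e_r(t,\theta)|^2}{t}\,dt\right) = \frac{r^2}{2}\int_0^r\frac{|\mathbf e_r|^2}{t}\,dt$. Integrating in $\theta$, dividing by $r$, and invoking \eqref{H1Estimate} (after checking the contribution of $B_h$ is negligible, which follows from $\mathbf u(0)=0$ and an elementary bound near the tip where $\mathbf v$ is also small) yields $\frac1r\int_{\partial B_r}|\mathbf e|^2\,ds \le \frac r2\cdot 3C_0^{1/2}h|\log h| = \tfrac32 C_0^{1/2}h|\log h|\,r$, and taking square roots gives a bound of the asserted form $3C_0^{1/4}\bigl(\tfrac{h|\log h|}{r}\bigr)^{1/2}r$ (with room to spare in the constant).

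The hard part will be the pointwise/averaged control of $|\mathbf u_r-\gamma|$ in \eqref{H1Estimate}: the energy does \emph{not} directly bound $\|\gamma'\|_{L^2}$ or the $W^{2,2}$ norm of the boundary data (this is exactly the difficulty the authors flag for Theorem \ref{MainTheorem1}), so one cannot naively say $\mathbf u_r$ is close to the unit radial column of a nearby isometry. I expect the resolution to be a careful splitting of $B_1\setminus B_h$ into the ``good set'' where $\mathrm{dist}(D\mathbf u, O(2,3))$ is pointwise small — where one projects $D\mathbf u$ onto $O(2,3)$ and compares columns — and a ``bad set'' of controlled measure on which one uses the defect term directly together with $|\log h|$-type bounds, summing the two contributions. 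Getting the explicit constant $3$ (rather than some large universal constant) will require being slightly clever with the Cauchy–Schwarz steps, but is not conceptually serious.
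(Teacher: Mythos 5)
Your overall architecture (reduce \eqref{H1Estimate} to $\int_{\mathbb S^1}\int_h^1|{\bf u}_r-\gamma|^2\,dr\,d\theta$, then get \eqref{L2Estimate} by integrating ${\bf e}_r$ along rays) matches the paper's, but the central mechanism you propose for \eqref{H1Estimate} does not work, and this is a genuine gap rather than a detail. You plan to control $|{\bf u}_r-\gamma|$ by writing it as an integral of ${\bf u}_{r\theta}$ or ${\bf u}_{rr}$ and trading against $\bigl(h^2\int|D^2{\bf u}|^2\bigr)^{1/2}\le C_0^{1/2}h|\log h|^{1/2}$. But the hypothesis only gives $\int_{B_1}|D^2{\bf u}|^2\le C_0|\log h|$, which \emph{diverges} as $h\to 0$; integrating second derivatives along rays of unit length and applying Cauchy--Schwarz produces quantities of size $C_0^{1/2}|\log h|$, not something small, so the bending term cannot force ${\bf u}_r$ close to $\gamma$ (and $\gamma={\bf u}(1,\cdot)$ is not ${\bf u}_r(1,\cdot)$, so there is no base point from which to integrate). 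The paper's estimate uses \emph{only} the stretching energy, via the exact pointwise identity $|{\bf e}_r|^2=(|{\bf u}_r|^2-1)-2({\bf e}\cdot\gamma)_r$ (which exploits $|\gamma|=1$): the first term is handled by Cauchy--Schwarz against $\int_{B_1\setminus B_h}r^{-2}\,dx=2\pi|\log h|$ and the membrane energy, giving exactly $h|\log h|$; the second term is an exact radial derivative whose integral collapses to boundary terms, which vanish at $r=1$ (since ${\bf e}(1,\cdot)=0$) and are small at $r=h$. Your proposal contains no substitute for this cancellation.

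The second gap is the control at scale $h$. The boundary term $\int_{\mathbb S^1}{\bf e}\cdot\gamma\,(h,\theta)\,d\theta$ in \eqref{H1Estimate}, and the starting value $|{\bf e}(h,\theta)|$ in the ray integration for \eqref{L2Estimate}, both require knowing that $|{\bf u}(h,\theta)|\lesssim C_0^{1/4}h|\log h|^{1/4}$. This is \emph{not} an ``elementary bound near the tip following from ${\bf u}(0)=0$'': it is the content of Lemma \ref{OscillationEstimate}, which rests on the quantitative stretching inequality of Lemma \ref{StretchingInequality} (a map that moves a point by $Ah$ over a radial segment of length $h$ pays stretching energy $\gtrsim A^4h^2$) together with a covering argument on the half-circle $\partial B_h^-$. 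Relatedly, your plan for \eqref{L2Estimate} integrates ${\bf e}_r$ from $r=0$ with the weight $1/t$, but \eqref{H1Estimate} only controls $\int_h^1$, and $\int_0^h|{\bf e}_r|^2/t\,dt$ need not be small (or even finite with useful bounds) for a general $W^{2,2}$ map; the paper instead integrates from $r=h$ and uses the oscillation estimate there. To repair your proof you would need to (i) replace the Hessian-based comparison by the algebraic identity above, and (ii) prove the scale-$h$ oscillation estimate from the stretching energy.
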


Inequality (\ref{L2Estimate}) says that  maps with bounded normalized energy are well-approximated by the cones over their boundary data at scales $r >> h|\log h|$. The proof of this fact relies only on
the bound for the stretching energy. Inequality (\ref{H1Estimate}) says that, on average, the radial derivative of ${\bf u}$ is close to $\gamma$ on circles.

Before proving Proposition \ref{GeometricEstimate} we need some preliminary inequalities.

\begin{lem}\label{StretchingInequality}
Let $f: [0,\,h] \rightarrow \mathbb{R}^n$ be a Lipschitz function satisfying $\frac{|f(h) - f(0)|}{h} = A \geq 2.$ Then
$$\int_0^h x(|f'|^2 - 1)^2 \,dx \geq h^2 \frac{A^4}{16}.$$
\end{lem}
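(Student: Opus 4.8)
\textbf{Proof proposal for Lemma \ref{StretchingInequality}.}

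The plan is to reduce to a one-dimensional variational problem and exploit the mismatch between the large "displacement" $Ah$ and the constraint that $f'$ should be close to a unit vector in order to make $(|f'|^2-1)^2$ small. First I would observe that only the component of $f$ in the direction $\nu := (f(h)-f(0))/|f(h)-f(0)|$ matters: writing $g(x) := f(x)\cdot \nu$, we have $|f'|^2 \geq |g'|^2$ pointwise, hence $(|f'|^2-1)^2 \geq (|g'|^2 - 1)^2$ on the set where $|f'| \geq 1$, but one must be a little careful since $(t-1)^2$ is not monotone for $t < 1$. A cleaner route: since $\int_0^h x\,(|f'|^2-1)^2\,dx$ only decreases if we replace $f$ by $g\nu$ when $|g'| \le |f'|$ — this is true wherever $|g'| \ge 1$, and wherever $|g'| < 1 \le |f'|$ we can only have made things larger — actually the safe statement is simply $(|f'|^2 - 1)^2 \ge (|g'|^2 - 1)^2$ whenever $|f'| \ge |g'| \ge 1$ OR $|f'|,|g'| \le 1$; the only bad case is $|g'| < 1 \le |f'|$. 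I would handle this by instead noting that it suffices to prove the inequality for scalar $f$, because given vector-valued $f$ one can always replace it on each subinterval by a scalar reparametrization with $|\tilde f'| = |f'|$ and the same endpoints provided $\int_0^h |f'|\,dx \ge Ah$; since $A \ge 2$ this is automatic, and monotonicity of the integrand in $|f'|$ on $[1,\infty)$ together with the freedom to only keep the part of the path "moving forward" lets us assume $f:[0,h]\to\mathbb R$ with $f' \ge 0$, $f(h)-f(0) = Ah$.

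Now for scalar $f$ with $f(0)=0$, $f(h)=Ah$, $f'\ge 0$, I want a lower bound for $J(f) := \int_0^h x(f'^2-1)^2\,dx$. The key point is that $f$ must accumulate a lot of "speed": $\int_0^h f'\,dx = Ah$. I would argue by a stopping-time/localization idea. Since $\int_0^h f'\,dx = Ah$ and the integrand weight $x$ is small near $0$, the cheapest strategy concentrates large $f'$ near $x = h$. Concretely, let $S := \{x \in [0,h]: f'(x) \ge A/2\}$. On $S$, $(f'^2-1)^2 \ge (f'-1)^2(f'+1)^2 \ge (A/2-1)^2 (A/2)^2 \cdot$ — hmm, better to just use $(f'^2 - 1)^2 \ge \tfrac14 f'^4$ when $f' \ge A/2 \ge 1$ (indeed $f'^2 - 1 \ge f'^2/2$ once $f'^2\ge 2$, and $A/2\ge 1$ gives $f'^2 \ge 1$; to be safe take $(f'^2-1)^2 \ge \tfrac14 f'^4$ for $f' \ge \sqrt2$, and treat $A\in[2,2\sqrt2)$ separately or just absorb constants). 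Then on $S$, and using that $\int_{[0,h]\setminus S} f' \le (A/2) h$ so $\int_S f' \ge Ah/2$, Hölder or Jensen with the weight $x\,dx$ (total mass $h^2/2$) gives
$$
\int_S x f'^4\,dx \ \ge\ \frac{\left(\int_S x^{1/3} \cdot x^{2/3} f'\, ( \cdots )\right)}{\cdots},
$$
so I will instead use Jensen directly: $\int_0^h x f'^4 dx \ge \big(\int_0^h x\,dx\big)^{-3}\big(\int_0^h x f'\,dx\big)^4$ is false in general, so the correct tool is Hölder: $\int_0^h x f'\,dx \le \big(\int_0^h x f'^4\,dx\big)^{1/4}\big(\int_0^h x\,dx\big)^{3/4}$, giving $\int_0^h x f'^4\,dx \ge (Ah/2 - \text{const})^4 /(h^2/2)^3$ — but $\int_0^h x f'\,dx$ is not directly controlled, only $\int_0^h f'\,dx = Ah$ is.

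So the real main obstacle, and where I would spend the effort, is passing from control of $\int f' = Ah$ (unweighted) to control of $\int x f'^2$-type quantities (weighted by $x$), i.e. showing the minimizer cannot "cheat" by putting all its speed near $x=0$ where the weight vanishes. The right fix: split $[0,h] = [0,h/2] \cup [h/2,h]$. Either $f(h/2) \ge Ah/4$, in which case on $[0,h/2]$ we have average slope $\ge A/2$, and by the unweighted $L^1 \to L^4$ passage combined with $x \ge 0$ we get — no. Cleaner: use that on $[h/2,h]$ the weight satisfies $x \ge h/2$, so $J(f) \ge \tfrac h2 \int_{h/2}^h (f'^2-1)^2\,dx$, and on $[0,h/2]$ simply $J(f)\ge 0$; then if $f(h)-f(h/2) \ge Ah/4$, the average slope on $[h/2,h]$ is $\ge A/2$, so by Jensen $\int_{h/2}^h (f'^2-1)^2\,dx \ge \tfrac h2\big((A/2)^2-1\big)^2 \gtrsim \tfrac h2 A^4/16$ for $A\ge 2$ large-ish, giving $J \gtrsim h^2 A^4/64$. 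If instead $f(h/2) \ge Ah/4$, rescale: set $\tilde f(y) = 2 f(y h/(2)) /$ ... apply the same argument recursively on $[0,h/2]$ with the *same* $A$ (since average slope there is still $\ge A/2 \cdot 2 = A$ — wait, $f(h/2)/(h/2) \ge A/2$, not $A$). This suggests iterating with a slowly shrinking lower bound on the slope; summing a geometric-type series should still yield the clean bound $h^2 A^4/16$, possibly after sharpening the split point from $h/2$ to $\lambda h$ with $\lambda$ close to $1$ and optimizing. Alternatively — and this is probably what the paper does — one can avoid the recursion entirely by the substitution $x = h e^{-t}$, $t \in [0,\infty)$, turning $\int_0^h x(f'^2-1)^2 dx$ into an integral over a half-line with exponential weight, and $\int_0^h f'\,dx = Ah$ into a constraint, then use Euler–Lagrange / direct convexity to find the extremal profile explicitly (it should be of the form $f' \equiv$ const on a terminal interval and $f' \equiv$ something on the rest), from which the constant $1/16$ drops out. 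I would try the explicit-minimizer approach first since it is most likely to give the exact constant stated; the recursive dyadic argument is my fallback for robustness.
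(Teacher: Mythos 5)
There is a genuine gap: you never actually establish the core quantitative inequality. Your reduction to a scalar, monotone $f$ (via $\tilde f(x)=\int_0^x|f'(t)|\,dt$, which preserves the integrand and only increases the endpoint displacement) is fine, and you correctly identify the real difficulty — the weight $x$ vanishes at the origin, so the unweighted constraint $\int_0^h f'\,dx = Ah$ does not directly control weighted quantities. But none of your three proposed routes is carried out. The dyadic split relies on Jensen applied to $t\mapsto(t^2-1)^2$, which is not convex (its second derivative is $12t^2-4$), and more seriously the recursion in the second case degrades the slope bound geometrically at each stage; you do not show the resulting series recovers a bound of the form $h^2A^4/16$, and you acknowledge as much ("should still yield", "possibly after sharpening"). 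The explicit-minimizer/Euler--Lagrange route is only gestured at. As written, the lemma is not proved.

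The paper's argument is much shorter and sidesteps the scalar reduction entirely: after rescaling to $h=1$, apply Cauchy--Schwarz once to get
$$\int_0^1 x(|f'|^2-1)^2\,dx \ \geq\ \Bigl(\int_0^1 x^{1/2}(|f'|^2-1)\,dx\Bigr)^2 = \Bigl(\int_0^1 x^{1/2}|f'|^2\,dx - \tfrac23\Bigr)^2,$$
and then apply Cauchy--Schwarz a second time with the \emph{integrable} weight $x^{-1/2}$ (whose integral over $[0,1]$ is $2$) to get $\int_0^1 x^{1/2}|f'|^2\,dx \geq \tfrac12\bigl|\int_0^1 f'\,dx\bigr|^2 = \tfrac12 A^2$; since $A\geq 2$ one has $\tfrac12A^2-\tfrac23\geq\tfrac14A^2$, giving the constant $1/16$. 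This second step is exactly the mechanism you were searching for to convert the unweighted endpoint information into a weighted bound; the point is that $x^{1/2}$ sits between the weight $x$ in the energy and the weight $1$ in the constraint, and $x^{-1/2}$ is still integrable at the origin. I would encourage you to redo the proof along these lines.
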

\begin{proof}
Using the rescaling $f(x) \rightarrow \frac{1}{h}f(hx)$, we may assume that $h = 1$. By Cauchy-Schwarz we have
$$\int_0^1 x(|f'|^2-1)^2 \,dx \geq \left(\int_0^1 x^{1/2}|f'|^2\,dx - \int_0^1 x^{1/2}\,dx\right)^2=\left(\int_0^1 x^{1/2}|f'|^2\,dx - \frac{2}{3}\right)^2$$
and
$$\int_0^1 x^{1/2}|f'|^2\,dx \geq \frac{1}{2} \left| \int_0^1 f'\,dx \right|^2 = \frac{1}{2} A^2.$$
Combining these we obtain
$$\int_0^1 x(|f'|^2-1)^2 \,dx \geq \left(\frac{1}{2}A^2 - \frac{2}{3}\right)^2.$$
\end{proof}

As a consequence of Lemma \ref{StretchingInequality} we control the oscillation of bounded-energy maps at small scales:

\begin{lem}\label{OscillationEstimate}
Assume that $\overline{E}_h({\bf u}) \leq C_0.$ Then
$$\sup_{\partial B_h(x)} |{\bf u} - {\bf u}(x)| \leq 4C_0^{1/4}h|\log h|^{1/4}$$
for any $x \in B_{1-2h}$.
\end{lem}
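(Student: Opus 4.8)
The plan is to reduce the oscillation bound to a one-dimensional statement along radial segments, and then apply Lemma \ref{StretchingInequality} as a dichotomy. Fix $x \in B_{1-2h}$ and a point $y \in \partial B_h(x)$; I want to bound $|{\bf u}(y) - {\bf u}(x)|$. Consider the radial segment $\sigma(t) = x + t(y-x)/|y-x|$ for $t \in [0,h]$, and set $f(t) := {\bf u}(\sigma(t))$, so that $|f(h) - f(0)| = |{\bf u}(y) - {\bf u}(x)|$. We would like to run Lemma \ref{StretchingInequality} with $A = |{\bf u}(y)-{\bf u}(x)|/h$, but two issues arise: the lemma needs the weight $t$ (distance from $0$, not from $x$), and ${\bf u}$ is only $W^{2,2}$, hence its restriction to an arbitrary segment need not be Lipschitz. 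The second point is handled by a standard Fubini/slicing argument: for a.e.\ choice of segment direction, $f \in W^{1,2} \hookrightarrow C^0$ on $[0,h]$, and the relevant integrals are finite; alternatively one averages over a small cone of segments. So I will treat $f$ as Lipschitz (or absolutely continuous) for a.e.\ segment, which suffices.

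Next, the key dichotomy: either $A \leq 2$, in which case $|{\bf u}(y) - {\bf u}(x)| \leq 2h$ and we are done since $2h \leq 4C_0^{1/4} h |\log h|^{1/4}$ for $h$ small; or $A > 2$, and then Lemma \ref{StretchingInequality} gives
$$
\int_0^h t\,(|f'(t)|^2 - 1)^2\,dt \geq h^2 \frac{A^4}{16}.
$$
Now I integrate this over all $y \in \partial B_h(x)$, i.e.\ over all unit directions $\omega$, to recover a genuine area integral. Writing points near $x$ as $x + t\omega$, the measure $t\,dt\,d\omega$ is (up to the obvious two-dimensional Jacobian) the Lebesgue measure on the annulus $B_h(x)$, and $(|f'|^2 - 1)^2 = (|{\bf u}_r^{(x)}|^2 - 1)^2 \leq \mathrm{dist}^2(D{\bf u}, O(2,3))$ pointwise, where ${\bf u}_r^{(x)}$ denotes the derivative of ${\bf u}$ in the radial direction from $x$ — indeed, for $P \in M^{3\times 2}$ with $P^T P = I$ the columns are orthonormal, so $|Pe|=1$ for any unit $e$, and hence $||D{\bf u}\,\omega| - 1| \leq |D{\bf u}\,\omega - P\omega| \leq |D{\bf u} - P|$. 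Therefore
$$
\int_{B_h(x)} \mathrm{dist}^2(D{\bf u}, O(2,3))\,dz \;\geq\; c \int_{\{\omega:\, A(\omega) > 2\}} h^2 \frac{A(\omega)^4}{16}\,d\omega,
$$
with $c$ a universal constant coming from the change of variables. On the other hand, $\overline E_h({\bf u}) \leq C_0$ gives $\int_{B_1}\mathrm{dist}^2(D{\bf u},O(2,3))\,dz \leq C_0 h^2 |\log h|$. Combining, $\int_{\{A>2\}} A(\omega)^4\,d\omega \leq C\, C_0 |\log h|$ for a universal $C$, and so in particular $\sup_\omega A(\omega)^4 \leq C' C_0 |\log h|$ — wait, the average only controls an average, so the final step is to note that $A(\omega)$ is itself controlled: the integrand $A(\omega)^4$ over the full sphere of directions $\mathbb{S}^1$ (of finite measure) being bounded by $C C_0|\log h|$ does not immediately bound the sup.

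To close this gap — which I expect to be the main obstacle — I would instead argue segment-by-segment without averaging: I must show $A(\omega) \le 4 C_0^{1/4}|\log h|^{1/4}$ for \emph{every} $\omega$. For this, fix a direction and note that for the segment from $x$ to $y$, the radial energy along that segment is not directly bounded; what is bounded is an integral over the annulus. So one should argue that for a positive fraction of directions near a given $\omega$ the segment energy is comparable, using that ${\bf u} \in W^{2,2}$ controls the variation of $D{\bf u}$ across nearby segments. Concretely, by Lemma \ref{StretchingInequality} applied in a cone of directions of aperture $\sim |\log h|^{-1/2}$ around $\omega$, together with the $W^{2,2}$ bound $h^2\int|D^2{\bf u}|^2 \le C_0 h^2|\log h|$ to control the spread of endpoint values across the cone, one obtains that if $|{\bf u}(y)-{\bf u}(x)|$ is too large, then the stretching energy on the whole cone exceeds $C_0 h^2|\log h|$, a contradiction. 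Tracking the constants through this cone argument and Lemma \ref{StretchingInequality}'s factor $1/16$ should produce exactly the claimed constant $4$ (with room to spare for $h \le h_0$ universal). I would present the cone/Fubini bookkeeping carefully since that is where the quantitative $|\log h|^{1/4}$ and the universality of $h_0$ are actually earned.
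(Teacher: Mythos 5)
Your first half is on target and matches the paper: reduce to radial segments from $x$, apply Lemma \ref{StretchingInequality} as a dichotomy in $A$, and integrate over directions against the stretching energy to conclude that the set of ``bad'' directions $\{\omega : A(\omega) > M|\log h|^{1/4}\}$ has angular measure at most $16C_0M^{-4}$. You also correctly identify the real obstacle: this only controls $A(\omega)$ off a small exceptional set, not the supremum. But your proposed fix for that obstacle does not work. First, the quantitative bookkeeping fails: if you run Lemma \ref{StretchingInequality} over a cone of directions of aperture $\delta$ around a bad direction, the contradiction with the energy budget $C_0h^2|\log h|$ requires $\delta\,A^4/16 > C_0|\log h|$, so with $\delta \sim |\log h|^{-1/2}$ you only exclude $A \gtrsim |\log h|^{3/8}$, which is strictly weaker than the claimed $|\log h|^{1/4}$; to get the right exponent the aperture must be a universal constant, so the cone's endpoints sweep an arc of $\partial B_h(x)$ of length comparable to $h$. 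Second, and fatally, controlling ``the spread of endpoint values across the cone'' then means bounding the oscillation of ${\bf u}$ on an arc of $\partial B_h(x)$ of length $\sim h$ by $o(h|\log h|^{1/4})$ --- which is essentially the statement being proved. The available bound $\int_{B_1}|D^2{\bf u}|^2 \le C_0|\log h|$ cannot supply this: in two dimensions $W^{2,2}$ does not embed into $C^1$, so $D{\bf u}$ is not uniformly controlled, and the best oscillation estimate at scale $h$ one can extract is of order $h|\log h|^{1/2}$ (up to further logarithms), which is too large. So the step you defer (``tracking the constants through this cone argument'') is not a bookkeeping issue; it is the missing idea.

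The paper closes the gap without any regularity of $D{\bf u}$, by a second application of the same ``most directions are good'' estimate followed by a measure count. Suppose some $y = (h,0) \in \partial B_h(x)$ satisfies $|{\bf u}(y)-{\bf u}(x)| > 2Mh|\log h|^{1/4}$. Run the segment argument once centered at $x$ (segments to $\partial B_h(x)$) and once centered at $y$ (segments of length between $\sqrt 2\,h$ and $2h$ whose endpoints also lie on the half-circle $\partial B_h^-\subset\partial B_h(x)$ facing away from $y$). Each application shows that the endpoints $z\in\partial B_h^-$ for which the corresponding increment exceeds $Mh|\log h|^{1/4}$ form a set of length $< 16C_0M^{-4}h$. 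By the triangle inequality and the assumed badness of $y$, no $z$ can be good for both centers, so the two good sets are disjoint subsets of $\partial B_h^-$, each of length $> (\pi - 16C_0M^{-4})h$, while $\mathcal H^1(\partial B_h^-) = \pi h$. This forces $32C_0M^{-4} > \pi$, which fails for $M = 2C_0^{1/4}$, yielding the stated constant. If you want to salvage your write-up, replace the cone argument by this two-center disjointness argument; the rest of your reduction can stay as is.
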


\begin{proof}
By translating and adding a constant vector we may assume that $x = 0$ and ${\bf u}(0) = 0$. Let $M>0$ to be fixed later, and define
$$E_M := \{\theta\in \mathbb S^1: |{\bf u}(h,\,\theta)| > M h|\log h|^{1/4}\}.$$
By the upper energy bound we have
$$\int_{E_M}\int_0^h r\left(|{\bf u}_r|^2 - 1\right)^2\,dr\,d\theta \leq C_0h^2|\log h|.$$
On the other hand, by Lemma \ref{StretchingInequality} we have
$$\int_{E_M}\int_0^h r\left(|{\bf u}_r|^2 - 1\right)^2\,dr\,d\theta > \mathcal H^1(E_M)\, h^2|\log h| \, \frac{M^4}{16},$$
where $\mathcal H^1$ denotes the 1-dimensional Hausdorff measure.
We conclude from the previous inequalities that
\begin{equation}
\label{eq:M contr}
\mathcal H^1(E_M) < 16 C_0 M^{-4}.
\end{equation}
Assume by way of contradiction that
\begin{equation}
\label{eq:u contr}
|{\bf u}(h,\,0)| > 2Mh|\log h|^{1/4},
\end{equation}
and consider the half-circle
$$
\partial B_h^-:=\{(h,\theta):\theta \in (\pi/2,3\pi/2)\}.
$$
We repeat the above argument with $(h,\,0)$ in place of the origin, $\theta \in (3\pi/4,5\pi/4)$, and $r \in[0,\rho(\theta)]$
with $\rho(\theta):=2|\cos\theta|h \in (\sqrt{2}h,2h)$, so that
$$
(h,0)+(\rho(\theta),\theta) \in \partial B_h^-\qquad \forall\, \theta \in (3\pi/4,5\pi/4)
$$
(see Figure \ref{GeomEst_pic}). 
 \begin{figure}
 \centering
    \includegraphics[scale=0.25]{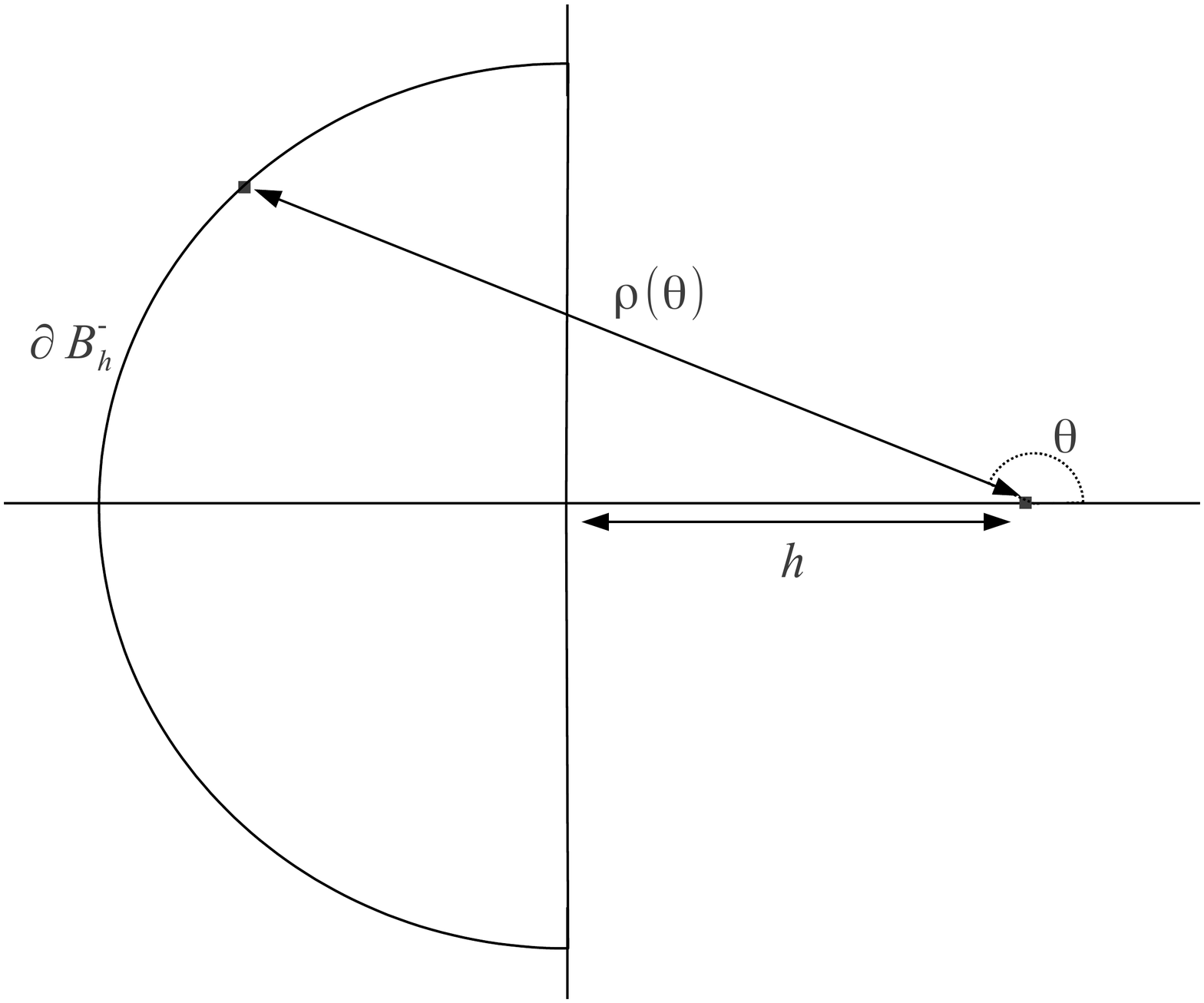}
 \caption{}
 \label{GeomEst_pic}
\end{figure}
In this way,
if we set
$$\tilde E_M := \bigl\{\theta\in (3\pi/4,5\pi/4): \bigr|{\bf u}\bigl((h,0)+(\rho(\theta),\theta)\bigr)-{\bf u}(h,\,0)\bigr| > M h|\log h|^{1/4}\bigr\},$$
 it follows by the upper bound and Lemma \ref{StretchingInequality} again, that
\begin{align*}
C_0h^2|\log h| &\geq \int_{\tilde{E}_M} \int_0^{\rho(\theta)} r(|{\bf u}_r|^2 - 1)^2\,dr\,d\theta \\
&> \mathcal H^1(\tilde E_M) (\sqrt{2}h)^2 |\log h|\frac{M^4}{16},
\end{align*}
thus
\begin{equation}
\label{eq:M contr2}
\mathcal H^1(\tilde E_M) < 8 C_0 M^{-4}.
\end{equation}
Recalling \eqref{eq:u contr}
it follows that the sets
$$
A_M:=\{(h,\theta):\theta \in (\pi/2,3\pi/2)\setminus E_M\}
$$
and
$$
\tilde A_M:=\{(h,0)+(\rho(\theta),\theta):\theta \in (3\pi/4,5\pi/4)\setminus \tilde E_M\},
$$
are disjoint subsets of $\partial B_h^-$, thus 
$$
\mathcal{H}^1(A_M)+\mathcal{H}^1(\tilde A_M)\leq \mathcal H^1(\partial B_h^-)=\pi h.
$$
On the other hand, \eqref{eq:M contr} and \eqref{eq:M contr2} imply that
$$
\mathcal{H}^1(A_M)=h\int_{(\pi/2,3\pi/2)\setminus E_M} d\theta> \bigl(\pi-16C_0M^{-4}\bigr)h, 
$$
\begin{align*}
\mathcal{H}^1(\tilde A_M)&=\int_{(3\pi/4,5\pi/4)\setminus \tilde E_M} \sqrt{\rho'(\theta)^2+\rho(\theta)^2}\,d\theta\\
&=
2h\int_{(3\pi/4,5\pi/4)\setminus \tilde E_M} d\theta
> \bigl(\pi-16C_0M^{-4}\bigr)h.
\end{align*}
Combining the last three estimates, we conclude that
$$
32 C_0M^{-4}> \pi,
$$
a contradiction if we chose $M:=2C_0^{1/4}$. 

This proves that \eqref{eq:u contr} is false, and since $(h,0)$ was an arbitrary point on $\partial B_h$, this concludes the proof.
\end{proof}

Now, using the boundary data, we prove Proposition \ref{GeometricEstimate}. The approach is the same as in \cite{BKN} and \cite{MO}.

\begin{proof}[{\bf Proof of Proposition \ref{GeometricEstimate}:}]
By the definition of ${\bf e}$ we have
$$(|{\bf u}_r|^2 - 1) - 2({\bf e}\cdot \gamma)_r = |{\bf e}_r|^2.$$
Using the boundary data and Lemma \ref{OscillationEstimate} we conclude that
$$\int_{B_1 \backslash B_h} \frac{|{\bf e}_r|^2}{r}\,dx \leq  \int_{B_1 \backslash B_h} \frac{(|{\bf u}_r|^2 - 1)}{r}\,dx + 16\pi C_0^{1/4} h|\log h|^{1/4}.$$
Applying Cauchy-Schwarz to the first term and using the energy bound we obtain the $H^1$ estimate
$$\int_{B_1 \backslash B_h} \frac{|{\bf e}_r|^2}{r}\,dx \leq 3C_0^{1/2} h|\log h|.$$
for $h < h_0$ universal. This is inequality (\ref{H1Estimate}).

This estimate controls the function $|{\bf e}|$ on circles. Indeed, by the fundamental theorem of calculus, for $r > h$ we have
$$\frac{1}{2}|{\bf e}|^2(r,\,\theta) \leq |{\bf e}|^2(h,\,\theta) + r \left(\int_{h}^1 |{\bf e}_r(\rho,\,\theta)|^2\,d\rho\right).$$
Integrating on $\partial B_r$ and using Lemma \ref{OscillationEstimate} and inequality (\ref{H1Estimate}) we obtain
$$\frac{1}{r}\int_{\partial B_r} |{\bf e}|^2 \,ds < 64\pi C_0^{1/2} h^2|\log h|^{1/2} + 6C_0^{1/2}\left(\frac{h|\log h|}{r}\right)r^2.$$
Since the second term dominates for $r \geq h$ whenever $h$ is sufficiently small, (\ref{L2Estimate}) follows.
\end{proof}

\vspace{3mm}

\subsection{$\Gamma$-Convergence}
We now prove the $\Gamma$-convergence of the functionals $\overline{E}_h$ on $X = \{{\bf u} \in W^{1,4}(B_1; \,\mathbb{R}^3): {\bf u}(0) = 0 \text{ and } {\bf u}|_{\mathbb{S}^1} \in \mathbb{S}^2\}$
to a limiting functional on conical isometries. Let ${\bf u}^h$ be a family of maps in $X$ such that $\overline{E}_h({\bf u}_h) \leq C_0$ for some $C_0 \geq 1$. The key result is the lower-semicontinuity:

\begin{prop}\label{LSC}
Under the above hypotheses, there exist $h_k \rightarrow 0$ such that ${\bf u}^{h_k}$ converge in $L^2$ to a
one-homogeneous isometry ${\bf u}^0 \in W^{2,2}(B_1 \backslash B_{1/2})$, and furthermore
$$\liminf_{h \rightarrow 0} \overline{E}_h({\bf u}^h) \geq \overline{E}_0({\bf u}^0).$$
\end{prop}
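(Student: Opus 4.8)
The plan is to establish the two assertions of Proposition \ref{LSC} in order: first the compactness (extraction of a subsequence converging in $L^2$ to a one-homogeneous isometry on $B_1\setminus B_{1/2}$), then the $\liminf$ inequality for the energies. For the compactness, I would split each ${\bf u}^h$ into its conical part ${\bf v}^h(r,\theta)=r\gamma^h(\theta)$ and the error ${\bf e}^h={\bf u}^h-{\bf v}^h$, as set up before Proposition \ref{GeometricEstimate}. The hard part of the whole proposition is here, and it is exactly the difficulty flagged in the introduction: the normalized energy $\overline E_h$ does not control the $W^{2,2}$ norm of the boundary datum $\gamma^h$, so one cannot directly extract a limit curve. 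To get around this I would use the geometric estimate \eqref{L2Estimate}: it says $\|{\bf u}^h-{\bf v}^h\|_{L^2(\partial B_r)}\le 3C_0^{1/4}(h|\log h|/r)^{1/2}r\cdot r^{1/2}$ (up to constants), so at the fixed scale $r=1$ the traces ${\bf u}^h|_{\mathbb S^1}=\gamma^h$ and ${\bf v}^h|_{\mathbb S^1}=\gamma^h$ already coincide, while for $r\in[1/2,1]$ the error ${\bf e}^h\to 0$ in $L^2(B_1\setminus B_{1/2})$. Thus on the annulus $B_1\setminus B_{1/2}$ the map ${\bf u}^h$ is $L^2$-close to the cone $r\gamma^h$. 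To make the cones precompact I would pass to a Lipschitz truncation/rescaling of ${\bf u}^h$ on a small ball (the "Lipschitz rescalings" the authors mention in the text after Theorem \ref{MainTheorem1}), whose boundary data is bounded in $W^{2,2}$ and close in $L^2$ to $\gamma^h$; combined with $\overline E_h({\bf u}^h)\le C_0$ this yields a uniform $W^{2,2}(\mathbb S^1)$ bound for these rescaled curves, hence (by Rellich) convergence of a subsequence $\gamma^{h_k}\to\gamma^0$ in $C^1(\mathbb S^1)$, with $|\gamma^0|\equiv 1$ on $\mathbb S^2$ and $|(\gamma^0)'|\equiv 1$ preserved in the limit. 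Setting ${\bf u}^0(r,\theta)=r\gamma^0(\theta)$ gives a one-homogeneous isometry in $W^{2,2}(B_1\setminus B_{1/2})$, and ${\bf u}^{h_k}\to {\bf u}^0$ in $L^2(B_1)$ (on the inner ball by the oscillation bound of Lemma \ref{OscillationEstimate} and $r$-homogeneity of the target, on the annulus by \eqref{L2Estimate}).

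For the $\liminf$ inequality I would localize to the annulus $A:=B_1\setminus B_{1/2}$, where the limit lives, and discard the contribution of the stretching term (it is nonnegative) as well as the energy on $B_{1/2}$ (also nonnegative). On $A$ one has $\overline E_h({\bf u}^h)\ge \frac{h^2}{h^2|\log h|}\int_A |D^2{\bf u}^h|^2\,dx=\frac1{|\log h|}\int_A|D^2{\bf u}^h|^2\,dx$, which by itself degenerates since $|\log h|\to\infty$; so a naive bound is not enough. Instead I would use the lower bound for the \emph{full} $E_h$ coming from \cite{BKN}, \cite{MO}: the energy needed to interpolate between the conical ansatz away from the tip and a regularization near the tip is at least $(1-o(1))h^2|\log h|\cdot\frac1{\log 2}\int_A|D^2{\bf v}^h|^2\,dx$ — more precisely one shows $\overline E_h({\bf u}^h)\ge \frac{1}{\log(1/h)-\log(1/2)}\int_{B_{1/2}\setminus B_h}|D^2{\bf u}^h|^2\,dx$-type lower bounds by integrating the bending term over dyadic annuli $B_{2^{-j}}\setminus B_{2^{-j-1}}$ and using that on each such annulus the bending energy of a near-isometry is bounded below by the bending energy of the cone, which scales like (curvature of $\gamma^h$)${}^2$ and contributes equally from each dyadic scale, summing to $\sim|\log h|\int_{\mathbb S^1}(\kappa^h)^2\,ds$. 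Combining with $\int_{\mathbb S^1}(\kappa^h)^2\,ds\to\int_{\mathbb S^1}(\kappa^0)^2\,ds$ (lower semicontinuity under the $C^1$ convergence $\gamma^{h_k}\to\gamma^0$ and weak $W^{2,2}$ convergence), and $\frac{1}{\log 2}\int_A|D^2{\bf u}^0|^2\,dx=\frac{1}{\log 2}\int_{\mathbb S^1}(\kappa^0)^2\,ds=\overline E_0({\bf u}^0)$ for the one-homogeneous isometry, gives $\liminf_k\overline E_{h_k}({\bf u}^{h_k})\ge \overline E_0({\bf u}^0)$.

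I expect the compactness step — getting a $W^{2,2}(\mathbb S^1)$ bound on (a rescaling of) $\gamma^h$ out of an energy bound that a priori controls only the $L^2$-distance of $D{\bf u}^h$ to $O(2,3)$ plus $h^2\|D^2{\bf u}^h\|_{L^2}^2$ — to be the main obstacle, since this is precisely where the geometric estimates of the previous subsection, together with a Lipschitz-truncation argument, have to be used in a nontrivial way. The second, energy-comparison step is closer to the known literature (\cite{BKN}, \cite{MO}) and should follow by adapting the dyadic lower-bound computation there, being careful to keep the constant $\frac{1}{\log 2}$ sharp. A minor technical point to handle along the way is that the limit curve's unit-speed and $\mathbb S^2$-valued constraints pass to the limit under $C^1$ convergence, so that ${\bf u}^0\in X\cap W^{2,2}(B_1\setminus B_{1/2})$ is genuinely a one-homogeneous isometry and $\overline E_0({\bf u}^0)$ is finite and given by the curvature integral.
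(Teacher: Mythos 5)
Your overall plan has the right shape, and the compactness discussion points in the right direction, but the $\liminf$ step as written has a genuine gap that makes the argument fail.

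For the compactness, you gesture at the right idea (use the geometric estimates to find a ``Lipschitz rescaling'' whose boundary data is bounded in $W^{2,2}$), but you never supply the actual mechanism. The paper's mechanism is a pigeonhole argument: using inequality \eqref{H1Estimate} one defines the bad set $R_h$ of radii where $r^{-1}\int_{\partial B_r}|{\bf u}^h_r-\gamma^h|^2$ exceeds $|\log h|^{-1}$, shows $\mathcal H^1(R_h)\le Ch|\log h|^2$, and then notes that the integral of $\frac1r$ over $[h|\log h|^2,1]\setminus R_h$ is $(1+o(1))|\log h|$. Integrating the normalized energy density in $r$ over this set and taking the infimum produces a single good radius $r_h$ at which the rescaled trace $\bar\gamma^h := r_h^{-1}{\bf u}^h(r_h,\cdot)$ is bounded in $W^{2,2}$, is $L^2$-close to $\gamma^h$, and carries an angular second-derivative integral bounded by the normalized energy. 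Without this selection, ``a uniform $W^{2,2}$ bound for these rescaled curves'' is merely asserted, not obtained.

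For the $\liminf$ inequality, the proposal goes wrong in a way that is not a matter of missing details. First, you write $\int_{\mathbb S^1}(\kappa^h)^2\,ds\to\int_{\mathbb S^1}(\kappa^0)^2\,ds$, attributing this to $C^1$ convergence of $\gamma^{h_k}$. But the original boundary curves $\gamma^h$ need not be in $W^{2,2}(\mathbb S^1)$ at all (this is exactly the point of the example ${\bf w}^h$ before the proof), so $\kappa^h$ may not be defined, and in any case $\gamma^h$ do not converge in $C^1$ and the curvature integrals are not uniformly bounded. The quantity that is controlled is the curvature integral of the \emph{rescaled} curve $\bar\gamma^h$ at the good radius $r_h$, and even there one only gets \emph{lower semicontinuity} under weak $W^{2,2}$ convergence, not convergence of the integrals. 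Second, the proposal to black-box the dyadic lower bound from \cite{BKN}, \cite{MO} does not apply here: those estimates require fixed $C^2$ or $C^3$ boundary data, precisely the assumption the paper is trying to avoid (this is stated explicitly in the Remark following Theorem~\ref{MainTheorem1}). The naive picture that each dyadic annulus contributes $\sim\int_{\mathbb S^1}(\kappa^h)^2$ to the bending energy is false for a general near-isometry; the contribution can concentrate on few scales, and indeed the paper's proof never claims equidistribution. Instead it bounds $\liminf_h\overline E_h({\bf u}^h)$ from below by $\liminf_h\frac1{r_h}\int_{\partial B_{r_h}}|r_hD^2{\bf u}^h|^2\,ds$, identifies the integrand with $|\bar\gamma^h_{\theta\theta}+{\bf u}^h_r(r_h,\cdot)|^2$, replaces ${\bf u}^h_r(r_h,\cdot)$ by $\bar\gamma^h$ up to errors controlled by \eqref{DerivativeControl} and \eqref{L2Convergence}, and concludes by weak $L^2$ lower semicontinuity applied to $\bar\gamma^h_{\theta\theta}+\bar\gamma^h\rightharpoonup\gamma^0_{\theta\theta}+\gamma^0$. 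Your sketch has to be replaced by this single-scale argument if it is to close.
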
 

As mentioned in the introduction, a difficulty of the proof is that the boundedness of the normalized energies does not imply the boundedness of  the boundary data ${\bf u}^h(1,\,\theta)$ in $W^{2,2}(\mathbb{S}^1)$. Consider for example
$${\bf w}^h(r,\,\theta) = r(\cos(\theta)\, e_1 + \sin(\theta)\, e_2) + \varphi(r)|\log h|^{1/2}\epsilon_h^2\sin(\theta/\epsilon_h)\,e_3,$$
where $\varphi$ is a smooth cutoff that is $1$ near $r = 1$ and $0$ for $r < 1/2$, and let
$${\bf u}^h (r,\,\theta) = \frac{{\bf w}^h(r,\theta)}{|{\bf w}^h(1,\,\theta)|}.$$
Then $\|D^2 {\bf u}^h\|_{L^2(B_1)}|\log h|^{-1/2}$ is bounded. Furthermore, the second term in the definition of ${\bf w}^h$ gets arbitrarily small in $C^1$ as $\epsilon_h \rightarrow 0$,
so for $\epsilon_h$ small we can keep the energy of ${\bf u}^h$ bounded. However, $\| {\bf w}^h \|_{W^{2,2}(\mathbb{S}^1)} \sim |\log h|^{1/2}$ blows up as $h \rightarrow 0$.

To overcome this difficulty, we use the geometric estimates to select some suitable Lipschitz rescalings of ${\bf u}^h$ whose boundary data have the same $L^2$ limit, but are bounded in $W^{2,2}$.

\begin{proof}[{\bf Proof of Proposition \ref{LSC}}]
Set $\gamma^h(\theta):={\bf u}^h(1,\theta)$, and define
$$R_h := \left\{r \in [h,\,1]: r^{-1}\int_{\partial B_r} |{\bf u}^h_r - \gamma^h|^2\,ds > |\log h|^{-1}\right\}.$$ 
By inequality (\ref{H1Estimate}) we have the estimate
\begin{equation}\label{RadialDerivativeEstimate}
\mathcal H^1(R_h) \leq Ch|\log h|^2.
\end{equation}
(Here and below $C$ denotes a constant depending on $C_0$). This implies that
$$
\int_{[h|\log h|^2, \,1]\backslash R_h} \frac{1}{r}\,dr= (1 + o(1))|\log h|,
$$
therefore
\begin{align*}
&\overline{E}_h({\bf u}^h) \\
&\geq |\log h|^{-1} \int_{[h|\log h|^2, \,1]\backslash R_h} \frac{1}{r^2} \int_{\partial B_r} \left(|rD^2{\bf u}^h|^2 + \frac{r^2}{h^2}\text{dist}^2(D{\bf u}^h,\,O(2,\,3))\right)\,ds\,dr \\
&\geq (1 + o(1))\inf_{[h|\log h|^2,\,1] \backslash R_h} \frac{1}{r}\int_{\partial B_r}  \left(|rD^2{\bf u}^h|^2 + \frac{r^2}{h^2}\text{dist}^2(D{\bf u}^h,\,O(2,\,3))\right)\,ds.
\end{align*}
Thus, we can choose $r_h \in [h|\log h|^2,\,1] \backslash R_h$ such that
\begin{equation}\label{LSCReduction}
\liminf_{h \rightarrow 0} \overline{E}^h({\bf u}^h) \geq \liminf_{h \rightarrow 0} \frac{1}{r_h} \int_{\partial B_{r_h}} |r_hD^2{\bf u}^h|^2 \,ds,
\end{equation}
\begin{equation}\label{IsometryControl}
\frac{1}{r_h}\int_{\partial B_{r_h}} \text{dist}^2(D{\bf u}^h, \,O(2,\,3))\,ds \leq C|\log h|^{-4},
\end{equation}
and
\begin{equation}\label{DerivativeControl}
\frac{1}{r_h}\int_{\partial B_{r_h}} |{\bf u}^h_r - \gamma^h|^2\,ds \leq |\log h|^{-1}.
\end{equation}
Indeed, (\ref{IsometryControl}) follows from the fact that $r_h \geq h|\log h|^2$,
while for (\ref{DerivativeControl}) 
we use that $r_h \notin R_h$.

Note that, by inequality (\ref{L2Estimate}), we also have
\begin{equation}\label{L2Convergence}
\frac{1}{r_h} \int_{\partial B_{r_h}} |r_h^{-1}{\bf u}^h - \gamma^h|^2\,ds \leq C|\log h|^{-1}.
\end{equation}
Set
$$\bar{\gamma}^h(\theta) := \frac{1}{r_h}{\bf u}^h(r_h,\,\theta).$$
We first claim that 
$$\|r\bar{\gamma}^h\|_{W^{2,2}(B_1 \backslash B_{1/2})} \leq C.$$
Indeed, by (\ref{L2Convergence}) we have $\|\gamma^h - \bar{\gamma}^h\|_{L^2(\mathbb{S}^1)} \leq C|\log h|^{-1/2}$. In addition, by (\ref{LSCReduction}) and (\ref{IsometryControl}) one obtains
$$\int_{\mathbb{S}^1} |\bar{\gamma}^h_{\theta\theta}|^2\,d\theta \leq C.$$
This proves the claim, and we conclude that (up to taking a subsequence) $\bar{\gamma}^h$ converge weakly in $W^{2,2}(\mathbb{S}^1)$ and strongly in $W^{1,2}(\mathbb{S}^1)$ to a $W^{2,2}$ limit curve $\gamma^0$. 
Note that, as a consequence of the $L^2$ convergence of $\gamma^h$ to $\gamma^0$,
and of (\ref{IsometryControl}) and the strong $W^{1,2}$ convergence of $\bar{\gamma}^h$ to $\gamma^0$, we have 
$$|\gamma^0| = |\gamma^0_{\theta}| \equiv 1.$$

Finally, we can establish lower semicontinuity. 
Since the matrix $D^2{\bf u}^h$ contains $\frac{1}{r^2}{\bf u}_{\theta\theta}^h
+\frac{1}{r}{\bf u}^h_r$ as one of its coefficients
(this follows by computing the Hessian in polar coordinates), we have
\begin{align*}
&\liminf_{h \rightarrow 0} \overline{E}_h({\bf u}^h) \geq \liminf_{h \rightarrow 0} \frac{1}{r_h}\int_{\partial B_{r_h}} |r_hD^2{\bf u}^h|^2\,ds \\
&\geq \liminf_{h \rightarrow 0} \int_{\mathbb{S}^1} |\bar{\gamma}^h_{\theta\theta} + {\bf u}^h_r(r_h,\, \cdot)|^2\,d\theta \\
&\geq \liminf_{h \rightarrow 0} \left( (1-\delta) \int_{\mathbb{S}^1} |\bar{\gamma}^h_{\theta\theta} + \bar{\gamma}^h|^2 \,d\theta - C(\delta)\int_{\mathbb{S}^1} (|{\bf u}^h_r(r_h, \,\cdot) - \gamma^h|^2 + |\gamma^h - \bar{\gamma}^h|^2)\,d\theta \right) \\
&\geq (1-\delta) \int_{\mathbb{S}^1} |\gamma^0_{\theta\theta} + \gamma^0|^2\,d\theta.
\end{align*}
In the last line we use the lower semicontinuity of the $W^{2,2}$ norm, and inequalities (\ref{DerivativeControl}) and (\ref{L2Convergence}).
Since this holds for all $\delta > 0$, we conclude that
\begin{equation}\label{LowerSemicontinuity}
\liminf_{h \rightarrow 0} \overline{E}_h({\bf u}_h) \geq \int_{\mathbb{S}^1} |\gamma^0_{\theta\theta} + \gamma^0|^2\,d\theta=\frac{1}{\log 2} \int_{B_1 \backslash B_{1/2}} |D^2{\bf u}^0|^2\,dx,
\end{equation}
where ${\bf u}^0(r,\, \theta) := r\gamma^0(\theta)$ is a one-homogeneous $W^{2,2}$ isometry that coincides with the $L^2$ limit of ${\bf u}^h$.
\end{proof}

Theorem \ref{MainTheorem1} follows quickly from Proposition \ref{LSC}.


\begin{proof}[{\bf Proof of Theorem \ref{MainTheorem1}}]
We first show the lower semicontinuity inequality.

Assume that ${\bf u}^h$ converge to ${\bf u}^0$ in $X$. In the case
$\liminf_{h \rightarrow 0} \overline{E}_h({\bf u}^h) = +\infty$ we are done. In the other case, Proposition \ref{LSC} shows that ${\bf u}^0 \in W^{2,2}(B_1 \backslash B_{1/2})$ is a one-homogeneous isometry,
and furthermore that the lower semicontinuity inequality is satisfied.

Now assume that ${\bf u}^0 \in X$. We construct a recovery sequence.

If $\overline{E}_0({\bf u}^0 )= +\infty$ there is nothing to prove, so assume that ${\bf u}^0 \in W^{2,2}(B_1 \backslash B_{1/2})$ is a one-homogeneous isometry $r\gamma^0(\theta)$.
Let $f:\mathbb R\to\mathbb R$ be a smooth even function  such that $f(s) = |s|$ for $|s| \geq 1$ and $f(s)=s^2$ for $|s|\leq 1/2$.
Then 
$$|f''|+ |(f/s)'|+ |f'/s|+ |f/s^2| \leq C\qquad \forall\,s \in \mathbb R.
$$
Let ${\bf u}^h(r,\,\theta) := hf(r/h)\gamma^0(\theta)$. One computes
$$\overline{E}_h({\bf u}^h) = \overline{E}_0({\bf u}^0) + \frac{1}{|\log h|}\left( \int_{B_h} |D^2{\bf u}_h|^2\,dx + h^{-2} \int_{B_h} \text{dist}^2(D{\bf u}^h,\,O(2,\,3))\,dx\right).$$
By the inequalities for $f$, we have $|D^2{\bf u}^h| = O(h^{-1})$ and $|D{\bf u}^h| = O(1)$ in $B_h$. Thus, the last two terms are $O(|\log h|^{-1})$, and we conclude that
$$\lim\sup_{h \rightarrow 0} \overline{E}_h({\bf u}^h) = \overline{E}_0({\bf u}^0).$$
\end{proof}

\begin{rem}\label{ObstacleGammaConvergence}
Recall that $X_{\epsilon} = X \cap \{{\bf u}(B_1) \cap O_{\epsilon} = \emptyset\}$, where $O_{\epsilon}$ is the cylindrical obstacle $\{x_1^2 + x_2^2 > 1-\epsilon^2\} \cap \{x_3 < \epsilon\}$.
The same arguments show that the functionals $\overline{E}_h$ $\Gamma$-converge on $X_{\epsilon}$ to the functional $\overline{E}_0$.
To see this, note that $X_{\epsilon}$ is closed by the compact embedding of $W^{1,4}$ into $C^{0}$, that lower semicontinuity (Proposition \ref{LSC}) follows from bounded normalized energy,
and that the recovery sequence we construct above is in $X_{\epsilon}$ whenever ${\bf u}^0 \in X_\epsilon$.
\end{rem}

\section{Minimizers of the Limit Problem}\label{MinimizerDescription}
In this section we precisely describe the minimizers in $X_{\epsilon}$ of the limit energy $\overline{E}_0$.
Recall that this problem is equivalent to minimizing the so called ``Euler-Bernoulli elastica energy''
$$F(\gamma) := \int_{\mathbb{S}^1} \kappa^2\,ds$$
for curves in $Y_{\epsilon} = \{\gamma \in W^{2,2}(\mathbb{S}^1;\,\mathbb{S}^2): |\gamma'| = 1,\, \gamma \subset \{x_3 \geq \epsilon\}\}$. Here $\kappa = \gamma'' \cdot (\gamma \times \gamma')$ is the geodesic curvature of $\gamma$.

Note that all curves in $Y_\epsilon$ have length equal to $2\pi$.
Since $F(\gamma)$ is a geometric functional
(thus invariant under reparameterization), it can be defined also for general curves $\gamma$ (without the unit-speed constraint) with the understanding that $ds=ds_\gamma$ denotes the length element, and $\kappa$ is given by the formula $\kappa= \frac{\gamma'' \cdot (\gamma \times \gamma')}{|\gamma'|^3}$.
Hence, in order to have more freedom in our variations, we shall minimize $F$ over the set
$$
\tilde Y_{\epsilon} := \{\gamma \in W^{2,2}(\mathbb{S}^1;\,\mathbb{S}^2): {\rm Length}(\gamma) = 2\pi,\, \gamma \subset \{x_3 \geq \epsilon\}\},
$$
 where ${\rm Length}(\gamma):=\int ds_\gamma$ is the length functional.
\subsection{Euler-Lagrange Equation}
Let $\gamma$ be a minimizer of $F$ in $\tilde Y_{\epsilon}$. Up to a reparameterization, we can assume that $|\gamma'|\equiv 1$, thus $\gamma \in Y_\epsilon$.
We first compute the Euler-Lagrange equation, and show that $\gamma\in C^{2,1}$:

\begin{lem}\label{EulerLagrange}
Let $\gamma$ be a unit-speed minimizer
of $F$ in $\tilde Y_\epsilon$. 
Then $\gamma\in C^{2,1}(\mathbb{S}^1)$, and for some $\lambda \in \mathbb{R}$, the geodesic curvature $\kappa$ satisfies
\begin{equation}\label{ODE}
\kappa'' + ((1+ \lambda) + \kappa^2/2)\kappa \geq 0,
\end{equation}
with equality where $\gamma \cdot e_3 > \epsilon$. Moreover, $F(\gamma) \leq C\epsilon^2$
and $\gamma \subset \{x_3 \leq C\epsilon\}$ for some universal $C$.
\end{lem}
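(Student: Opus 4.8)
The plan is to derive the Euler--Lagrange inequality \eqref{ODE} from a first-variation argument adapted to the one-sided obstacle constraint $\gamma\cdot e_3\ge\epsilon$ and the length constraint, then to bootstrap the resulting equation to obtain $C^{2,1}$ regularity, and finally to obtain the energy and height bounds by exhibiting an explicit competitor. First I would set up admissible variations: since $F$ is geometric, it suffices to work with variations that keep $\gamma$ on $\mathbb S^2$ but need not preserve unit speed, so I write $\gamma_t = \pi_{\mathbb S^2}(\gamma + t\phi)$ for $\phi\in W^{2,2}(\mathbb S^1;\mathbb R^3)$ and compute $\frac{d}{dt}\big|_{t=0}F(\gamma_t)$ using the formula $\kappa=\frac{\gamma''\cdot(\gamma\times\gamma')}{|\gamma'|^3}$. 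To handle the length constraint ${\rm Length}(\gamma)=2\pi$ one introduces a Lagrange multiplier $\lambda$ (this requires checking the constraint is non-degenerate, i.e. the differential of the length functional is nontrivial, which holds since $\gamma$ is not a point); to handle the obstacle one restricts to variations with $\phi\cdot e_3\ge 0$ on the contact set $\{\gamma\cdot e_3=\epsilon\}$, which turns the equality $\delta F + \lambda\,\delta\,{\rm Length}=0$ into the inequality $\delta F + \lambda\,\delta\,{\rm Length}\ge 0$, and into an equality on the open set $\{\gamma\cdot e_3>\epsilon\}$ where variations are two-sided. Carrying out the computation (the routine but lengthy calculation that I expect is deferred to the Appendix, as the excerpt promises) and taking the unit-speed parametrization, the tangential and normal parts of the first variation reduce to the scalar identity $\kappa''+\big((1+\lambda)+\kappa^2/2\big)\kappa\ge 0$, with equality where $\gamma\cdot e_3>\epsilon$; the shift by $1$ in the coefficient reflects the ambient curvature of $\mathbb S^2$ (the term $\gamma\times\gamma'$ and the Gauss equation $\gamma''=\kappa(\gamma\times\gamma')-\gamma$), and $\lambda$ collects the length multiplier.

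Next I would prove $\gamma\in C^{2,1}$. A priori $\gamma\in W^{2,2}$, so $\kappa\in L^2$ and the obstacle reaction measure $\mu:=\kappa''+\big((1+\lambda)+\kappa^2/2\big)\kappa\ge 0$ is a priori only a distribution; the point is that $\mu$ is supported on the contact set and, being a nonnegative distribution, is a nonnegative Radon measure. Then $\kappa''=\mu - \big((1+\lambda)+\kappa^2/2\big)\kappa$ has the right-hand side in $L^1$ plus a measure, so $\kappa'\in BV\subset L^\infty$ locally and $\kappa\in W^{1,\infty}$, hence $\kappa\in C^{0,1}$; feeding this back, on the non-contact set $\{\gamma\cdot e_3>\epsilon\}$ the equation $\kappa''=-\big((1+\lambda)+\kappa^2/2\big)\kappa$ has a Lipschitz right-hand side so $\kappa\in C^{2,1}_{loc}$ there, while across the contact set one gets $\kappa\in C^{1,1}$ because the measure $\mu$ has an $L^\infty$ density (the standard obstacle-problem phenomenon: on the contact set $\gamma\cdot e_3\equiv\epsilon$ forces $\kappa$ to equal an explicit Lipschitz function of the geometry, so $\kappa''\in L^\infty$ there too). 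From $\kappa\in C^{1,1}\subset W^{2,\infty}$ and the Frenet-type relation $\gamma''=\kappa(\gamma\times\gamma')-\gamma$ one reads off $\gamma''\in C^{0,1}$, i.e. $\gamma\in C^{2,1}(\mathbb S^1)$.

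Finally, for $F(\gamma)\le C\epsilon^2$ and $\gamma\subset\{x_3\le C\epsilon\}$ I would use minimality against an explicit competitor: the circle $\gamma_\epsilon^\ast$ at height $x_3=\epsilon$, i.e. the boundary of the spherical cap $\{x_3\ge\epsilon\}\cap\mathbb S^2$, suitably parametrized on $\mathbb S^1$ with total length $2\pi$ (this is possible since the cap boundary has length $2\pi\sqrt{1-\epsilon^2}<2\pi$, so one can take a slightly lower circle or perturb; alternatively rescale $\mathbb S^1$). This competitor lies in $\tilde Y_\epsilon$ and has geodesic curvature $\kappa=\frac{\epsilon}{\sqrt{1-\epsilon^2}}=O(\epsilon)$, so $F(\gamma_\epsilon^\ast)=\int\kappa^2\,ds=O(\epsilon^2)$; minimality then gives $F(\gamma)\le C\epsilon^2$. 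For the height bound, once $F(\gamma)=\int\kappa^2\,ds\le C\epsilon^2$, I would argue that a unit-speed closed curve on $\mathbb S^2$ with small total squared geodesic curvature and lying in $\{x_3\ge\epsilon\}$ cannot reach high up: writing $g:=\gamma\cdot e_3$, one has $g''=-g+\kappa(\gamma\times\gamma')\cdot e_3$ so $\|g''\|_{L^2}\le \|g\|_{L^2}+\|\kappa\|_{L^2}$, and combined with $g\ge\epsilon$, $\int_{\mathbb S^1}g\,d\theta$ controlled (since $\gamma$ closed forces $\int\gamma\,d\theta$ balanced in the $e_1,e_2$ directions but $g$ could be large)\,---\,more robustly, I would use that the curve has length $2\pi$ and geodesic curvature $O(\epsilon)$ in $L^2$, hence is $W^{2,2}$-close to a geodesic (great circle); a great circle contained in $\{x_3\ge\epsilon\}$ must be within $C\epsilon$ of the equator $\{x_3=0\}$ after we rule out the degenerate cap, giving $\max g\le C\epsilon$. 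The main obstacle I anticipate is the rigorous derivation of the variational inequality with two simultaneous constraints (length multiplier plus obstacle) in the low-regularity $W^{2,2}$ setting\,---\,justifying differentiability of $F$ along the projected variations and the validity of the Lagrange multiplier rule\,---\,which is exactly why the excerpt defers these computations to an Appendix; the regularity bootstrap and the competitor estimates are comparatively routine.
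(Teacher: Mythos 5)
Your high-level plan matches the paper's, but two steps as you have written them do not go through, and a third issue is one of ordering.

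First, the competitor. The circle $\mathbb S^2\cap\{x_3=\epsilon\}$ has length $2\pi\sqrt{1-\epsilon^2}<2\pi$, so it is \emph{not} in $\tilde Y_\epsilon$, and neither of your proposed fixes works: a ``slightly lower circle'' violates the obstacle $\{x_3\ge\epsilon\}$, and ``rescaling $\mathbb S^1$'' does not change the length of the image curve, which is what the constraint $\mathrm{Length}(\gamma)=2\pi$ measures. You must instead build length \emph{into} the admissible region: the paper takes a curve that equals the latitude circle at height $\epsilon$ except on a fixed interval (say of length $1/10$) where it rises to height $C\epsilon$; the bump adds $O(\epsilon^2)$ to the length, compensating the deficit $2\pi(1-\sqrt{1-\epsilon^2})\sim\pi\epsilon^2$, while keeping the geodesic curvature $O(\epsilon)$, hence $F\le C\epsilon^2$.

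Second, the regularity bootstrap is where the real work lies, and your version has a genuine gap. You want to regard $\mu:=\kappa''+\bigl((1+\lambda)+\kappa^2/2\bigr)\kappa$ as a well-defined nonnegative distribution and conclude it is a Radon measure. But a priori $\gamma\in W^{2,2}$ gives only $\kappa\in L^2$, so $\kappa^3\in L^{2/3}$, which need not be locally integrable; the scalar expression $\mu$ is therefore not even a distribution until you already know $\kappa\in L^\infty$ (or at least $L^3$). The argument is circular: the scalar ODE \eqref{ODE} is a \emph{conclusion} of the regularity, not its starting point. The paper avoids this by staying with the vector-valued variational inequality \eqref{FullEL}, whose terms involve $\kappa$ only up to degree two (so $\kappa\in L^2$, $\kappa^2\in L^1$ suffice), and then runs a duality/test-function argument: for an arbitrary $\Phi\in C^1(\mathbb S^1;\mathbb R^3)$ it constructs an \emph{admissible} $\varphi$ (an antiderivative of $\Phi$ plus a large multiple of $e_3$) and deduces that $(\kappa N)'$ is a bounded measure, hence $\kappa N\in BV\subset L^\infty$, and then bootstraps to $(\kappa N)'\in L^\infty$. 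Note that the construction of admissible $\varphi$ crucially uses the bound $|\gamma\cdot e_3|\le C\epsilon$, which in the paper is established \emph{before} the regularity step; your proposed ordering (regularity first, energy and height bounds last) therefore also needs to be reversed. Finally, a smaller point: $\kappa$ is only Lipschitz, not $C^{1,1}$ --- it has corners at the free boundary (cf.\ Figure \ref{curvature_pic}); fortunately $\gamma\in C^{2,1}$ requires only $\kappa\in C^{0,1}$, via $\gamma''=-\gamma+\kappa N$.
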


Before proving Lemma \ref{EulerLagrange} we record some important variational inequalities.
Let $\varphi:\mathbb{S}^1 \rightarrow \mathbb{R}^3$ be a smooth map, and let $\psi := \varphi - (\varphi \cdot \gamma)\gamma$ be its projection tangent to the sphere at $\gamma$.
A calculation (see Appendix) gives
$$F\left(\frac{\gamma + \delta\psi}{|\gamma + \delta\psi|}\right) = F(\gamma) + 2\delta \int_{\mathbb{S}^1}  \left(\kappa N \cdot \psi'' - \frac{3}{2} \kappa^2 \gamma' \cdot \psi' + \kappa N \cdot \psi\right)\,ds + O(\delta^2),$$
where $N = \gamma \times \gamma'$ is the unit normal to the cone over $\gamma$.

By minimality, the first-order coefficient in $\delta$ is nonnegative provided the variation satisfies $\psi \cdot e_3 \geq 0$ where $\gamma$ touches the obstacle $\{x_3 = \epsilon\} \cap \mathbb{S}^2$ (this is needed to ensure that also $\frac{\gamma + \delta\psi}{|\gamma + \delta\psi|}$ is contained inside the set $\{x_3\geq \epsilon\}$ for $\delta\geq 0$), and preserves length to first order.
We can remove the length constraint
$$
\frac{d}{d\delta}|_{\delta=0}{\rm Length}\left(\frac{\gamma + \delta\psi}{|\gamma + \delta\psi|}\right)=\int_{\mathbb{S}^1} \kappa N \cdot \psi \,ds = 0
$$ by introducing a Lagrange multiplier (see Appendix): hence, we deduce that, some $\lambda \in \mathbb{R}$,
$$\int_{\mathbb{S}^1} \left(\kappa N \cdot \psi'' - \frac{3}{2} \kappa^2 \gamma' \cdot \psi' + (1 + \lambda)\kappa N \cdot \psi\right)\,ds \geq 0$$
provided $\psi \cdot e_3 \geq 0$ where $\gamma \cdot e_3 = \epsilon$.

Replacing $\psi$ with $\varphi - (\varphi \cdot \gamma)\gamma$ we get
\begin{equation}\label{FullEL}
\int_{\mathbb{S}^1} \left(\kappa N \cdot \varphi'' - \frac{3}{2} \kappa^2 \gamma' \cdot \varphi' + (1 + \lambda)\kappa N \cdot \varphi + \frac{1}{2} \kappa^2 \gamma \cdot \varphi \right)\,ds \geq 0
\end{equation}
provided $\psi \cdot e_3 \geq 0$ where $\gamma$ touches the obstacle. Furthermore, equality holds in (\ref{FullEL}) for variations supported in $\{\gamma \cdot e_3 > \epsilon\}$.
We prove Lemma \ref{EulerLagrange} using this form of the Euler-Lagrange equation.

\begin{proof}[{\bf Proof of Lemma \ref{EulerLagrange}}]
First, it is easy to show the energy bound using a competitor of the form $\hat \gamma=(1-\epsilon^2\eta(\theta)^2)^{1/2}(\cos(\theta) e_1 + \sin(\theta )e_2) + \epsilon \eta(\theta) e_3$, where $\eta = 1$ off of an interval of length $1/10$, and
grows to a universal height $C$ chosen so that the length constraint $\int |\hat \gamma'|=2\pi$ is satisfied.

As a consequence of the energy bound and the embedding $W^{2,2}(\mathbb S^1)\hookrightarrow L^\infty(\mathbb S^1)$, we have that $\gamma$ is at distance at most $C\epsilon$ in $L^{\infty}$ from a great circle inside $\mathbb S^2$. Since $\gamma$ is contained inside the upper hemisphere, this implies that $\gamma \subset \{x_3 \leq C\epsilon\}$.

Next, note that since $\gamma \in W^{2,2}(\mathbb{S}^1)$ we have that $\gamma'$ is continuous, so in particular is bounded.
Hence, since $\kappa\in L^2$, it follows by 
\eqref{FullEL} that
\begin{equation}\label{eq:sup}
\sup_{|\varphi|+|\varphi'| \leq 1,\, \varphi \in C^2(\mathbb S^1), \,(\varphi - (\varphi \cdot \gamma)\gamma)\cdot e_3 \geq 0} -\int_{\mathbb S^1} (\kappa N) \cdot \varphi''\,ds \leq M
\end{equation}
for some finite constant $M$.
We now note that, given any $C^1$ vector-field $\Phi:\mathbb S^1\to \mathbb R^3$, we can define
$$
\varphi(s):=\int_0^s\Phi(t)\,dt-\frac{s}{2\pi} \int_0^{2\pi}\Phi(t)\,dt+A_\Phi\,e_3\qquad \forall\,s \in [0,2\pi],
$$
where $A_\Phi>0$ is a constant to be fixed.
With this definition, $\varphi$ is a periodic vector-field of class $C^2$.
Also, because $|\gamma\cdot e_3|\leq C\epsilon$
we see that, for $\epsilon$ small,
\begin{align*}
(\varphi - (\varphi \cdot \gamma)\gamma)\cdot e_3&\geq -\|\Phi\cdot e_3\|_{L^1} -\|\Phi\cdot \gamma\|_{L^1}|\gamma\cdot e_3|+A_\Phi - A_\Phi (\gamma\cdot e_3)^2 \\
&\geq A_\Phi(1-C\epsilon^2) - \|\Phi\|_{L^1}(1+C\epsilon)>0
\end{align*}
if we chose $A_\Phi:=2\|\Phi\|_{L^1}$. This means that $\frac{\varphi}{\|\varphi\|_\infty+\|\Phi\|_\infty}$ is admissible in \eqref{eq:sup},
and we get
$$
-\int_{\mathbb S^1} (\kappa N) \cdot \Phi'\,ds \leq \Bigl(\|\varphi\|_\infty+\|\Phi\|_\infty\Bigr)M.
$$
Since $\|\varphi\|_\infty \leq C_0\|\Phi\|_{L^1}\leq  2\pi C_0\|\Phi\|_\infty$,
and $\Phi:\mathbb S^1\to \mathbb R^3$ was arbitrary,
this proves that
$$
\sup_{|\Phi| \leq 1,\, \Phi \in C^1(\mathbb S^1)} \biggl|\int_{\mathbb S^1} (\kappa N) \cdot \Phi'\,ds \biggr|\leq (1+2\pi C_0)M,
$$
which implies that $\kappa N = \gamma'' + \gamma \in BV(\mathbb{S}^1) \subset L^{\infty}(\mathbb{S}^1)$.

We show now that $(\kappa N)'$ is in fact in $L^{\infty}$. To this aim, using that $\kappa \in L^\infty$, it follows by 
\eqref{FullEL} that
$$
\sup_{\int_{\mathbb S^1}\left(|\varphi|+|\varphi'|\right) \leq 1,\, \varphi \in C^2(\mathbb S^1), \,(\varphi - (\varphi \cdot \gamma)\gamma)\cdot e_3 \geq 0} -\int_{\mathbb S^1} (\kappa N) \cdot \varphi''\,ds \leq \overline M.
$$
Hence, by the same argument as above, we get
$$
-\int_{\mathbb S^1} (\kappa N) \cdot \Phi'\,ds \leq \Bigl(\|\varphi\|_{L^1}+\|\Phi\|_{L^1}\Bigr)\overline M,
$$
and because  $\|\varphi\|_{L^1} \leq C_0\|\Phi\|_{L^1}$ we conclude that 
$$
\sup_{\|\Phi\|_{L^1} \leq 1,\, \Phi \in C^1(\mathbb S^1)} \biggl|\int_{\mathbb S^1} (\kappa N) \cdot \Phi'\,ds \biggr|\leq (1+ C_0)\overline M.
$$
As a consequence, $|(\kappa N)'| = |\gamma''' + \gamma'|$ is a bounded function, completing the proof that $\gamma\in C^{2,1}$.
\end{proof}

\subsection{$C^{2,1}$ estimate}\label{C21Estimate}
In this section we prove the $C^{2,1}$ estimate
\eqref{eq:C21 eps} in the statement of Theorem \ref{MainTheorem2},
and the fact that the contact set $\{\gamma\cdot e_3=\epsilon\}$ is nonempty.
To simplify the notation, we remove the subscript $\epsilon$ from $\gamma_\epsilon.$ 

So, let $\gamma$ be a unit-speed minimizer in $\tilde Y_{\epsilon}$ of $F$, let $h(s) := \gamma(s) \cdot e_3$, and let $\kappa$ be the geodesic curvature.
Before beginning, we note that showing $\|h\|_{C^{2,1}(\mathbb{S}^1)} \leq C\epsilon$ for universal $C$ suffices. (Here and below, $C$ denotes a large universal constant that may change from
line to line). Indeed, let $\theta$ be the angle coordinate of $\gamma$ in cylindrical coordinates with symmetry axis in the $e_3$ direction. 
A purely geometric calculation gives
\begin{equation}
\label{eq:theta h}
\theta'(s)^2 = \frac{1}{1-h^2}\left(1 - \frac{h'^2}{1-h^2}\right).
\end{equation}
As a consequence, if $\|h\|_{C^{2,1}(\mathbb{S}^1)} \leq C\epsilon$, then for $\epsilon$ small $\gamma$ is parametrized by $\alpha(\theta) := h(s(\theta))$ as
$$\gamma =\left\{(1-\alpha(\theta)^2)^{1/2}(\cos\theta\, e_1 + \sin\theta\, e_2) + \alpha(\theta)\, e_3: \theta \in \mathbb{S}^1\right\}$$
and furthermore $\|\alpha\|_{C^{2,1}(\mathbb{S}^1)} \leq C\epsilon$. We establish this estimate for $h$ below.

Of course, in what follows, we can assume that $\epsilon$ is universally small.

\begin{proof}[{\bf Proof of $C^{2,1}$ estimate}]
We first recall that, by Lemma \ref{EulerLagrange}, $|h| \leq C\epsilon$. In addition, a short computation yields the relation
\begin{equation}\label{HeightODE}
h''(s) + h(s) = \kappa N \cdot e_3.
\end{equation}
We conclude, using the energy bound $\int \kappa^2ds_\gamma \leq C\epsilon^2$
(see Lemma \ref{EulerLagrange}), that
$$\|h\|_{C^1(\mathbb{S}^1)} \leq C\epsilon.$$

In the following steps, we show that $\|\kappa\|_{C^{0,1}(\mathbb{S}^1)} \leq C\epsilon$ using energy minimality and the ODE for $\kappa$. In this way the estimate $\|h\|_{C^{2,1}(\mathbb{S}^1)} \leq C\epsilon$  will follow from the equation (\ref{HeightODE}).

\vspace{3mm}

{\bf Step $1$:} The minimum of $\kappa$ is negative. Indeed, if not then $\gamma$ encloses a convex subset of the half-sphere, and the length of $\gamma$ is strictly less than $2\pi$ (this follows, e.g., by Crofton's formula on the sphere).

\vspace{3mm}

{\bf Step $2$:} We have
$$1 + \lambda \geq 0.$$
Indeed, suppose not, and suppose that the minimum of $\kappa$ is $-A$ (with $A>0$ by Step 1) at $s = 0$. Note that $\kappa> 0$ on the contact set $\{x_3=\epsilon\}$, so this minimum must be attained in a noncontact point. Also, $\kappa$ cannot be negative everywhere. Indeed, if $\gamma(s)$ is a point which minimizes $\gamma\cdot e_3$, at this point the curvature of $\gamma$ is at least the one of the parallel at height $\gamma(s)\cdot e_3\geq \epsilon>0$, which is positive (note that, for the moment, we did not prove yet that the contact set is nonempty; this is the content of Step 7 below).

Note that, by symmetry of the ODE \eqref{ODE}, $\kappa(s)=\kappa(-s)$. Thus there exists $s_0\leq \pi$ such that $\kappa < 0$ on $(-s_0,s_0)$, with $\kappa = 0$ at $\pm s_0$, and
$$\kappa'' \leq -\frac{\kappa^3}{2}\leq A^3$$
on this interval. Integrating this information, we deduce that
\begin{equation}\label{eq:kappa A}
\kappa(s) \leq -A + \frac12 A^3s^2\qquad \text{ on $(-s_0,s_0)$},
\end{equation}
and because $\kappa(s_0)=0$ it follows that $s_0 \geq A^{-1}$. On the other hand, \eqref{eq:kappa A} also implies that $\kappa(s)\leq -A/2$ on $(-1/A,1/A)$, so 
by energy minimality it follows that 
$$
\frac{A}{2}=\int_{-1/A}^{1/A}\biggl(\frac{A}{2}\biggr)^2ds\leq \int_{-1/A}^{1/A}\kappa(s)^2ds \leq C\epsilon^2,
$$
thus $A \leq C\epsilon^2$. Combined with the bound $\pi \geq s_0 \geq A^{-1}$, this yields the desired contradiction for $\epsilon$ small enough.

\vspace{3mm}

{\bf Step $3$:} Set $\Lambda:=\sqrt{1+\lambda}$. We have
$$\Lambda \geq \frac{1}{2\pi}.$$
Indeed, the same considerations as in Step 2 give $\kappa(s) \leq -A + \frac12 A(A^2 + \Lambda^2)s^2$ for $s \in (-s_0,s_0)$, so we conclude that $(A^2 + \Lambda^2)^{-1/2} \leq s_0 \leq \pi$, and by energy minimality
that $A^2(A^2 + \Lambda^2)^{-1/2} \leq C\epsilon^2$. This yields
$$\frac{1}{\pi} \leq (A^2 + \Lambda^2)^{1/2} = A^2(A^2 + \Lambda^2)^{-1/2} + \Lambda^2(A^2 + \Lambda^2)^{-1/2} \leq C\epsilon^2 + \Lambda,$$
and the claim follows for $\epsilon$ small.

\vspace{3mm}

{\bf Step $4$:} We have
$$\Lambda^{-1}\max_{\mathbb S^1}|\kappa| \leq C\epsilon.$$
Indeed, let $A = \max_{\mathbb S^1} |\kappa|$. Using the energy estimate as in Step 3 we have $A^2(A^2 + \Lambda^2)^{-1/2} \leq C\epsilon^2$, or equivalently
$$
\frac{(A\Lambda^{-1})^2}{\sqrt{1+(A\Lambda^{-1})^2}} \leq \frac{C\epsilon^2}{\Lambda}.
$$
Thanks to Step 3, this yields
$$
\frac{(A\Lambda^{-1})^2}{\sqrt{1+(A\Lambda^{-1})^2}} \leq 2\pi C\epsilon^2,
$$
and the claim
follows easily.

\vspace{3mm}

{\bf Step $5$:} The curve $\gamma$ separates from the obstacle on intervals $I_i$ of length $\ell_i$. Let $A_i = \max_{I_i} |\kappa|$. Then
$$\sum_i A_i^2\ell_i \leq C\epsilon^2.$$
Indeed, the ODE has the conserved quantity
\begin{equation}
\label{eq:energy cons}
\kappa'^2 + \Lambda^2\kappa^2 + \frac{\kappa^4}{4}={\rm const} \qquad \text{on each $I_i$}.
\end{equation}
This implies that, if $\kappa$ has multiple local maxima or minima inside $I_i$, then at these points the value of $|\kappa|$ is equal to $A_i$
since $\kappa'=0$ there.
Thus, if we write $I_i=\cup_{j}I_{ij}$ where $\kappa$ has constant sign inside $I_{ij}$,
and if $x_{ij}\in I_{ij}$ is a local maximum for $|\kappa|$, we have $|\kappa(x_{ij})|=A_i$.

Also, it follows by Step 4 and the ODE
$$
\kappa''=-(\Lambda^2+\kappa^2/2)\kappa=-\Lambda^2(1+\Lambda^{-2}\kappa^2/2)\kappa
$$
that $\kappa$ is concave while positive and convex while negative.
Hence, $|\kappa|$ is concave inside each interval $I_{ij}.$ This implies that its graph stays above the triangle that has basis $I_{ij}\times \{0\}$ and vertex at $(x_{ij},A_{i})$,
therefore
$$
\int_{I_{ij}}\kappa^2ds \geq \frac12 A_i^2\mathcal H^1(I_{ij}).
$$
Adding these inequalities over $i$ and $j$ we obtain (since $\sum_j\mathcal H^1(I_{ij})=\ell_i$)
$$\frac12 \sum_i A_i^2\ell_i\leq \int_{\mathbb S^1}\kappa^2ds,
$$
and we conclude by energy minimality (see Lemma \ref{EulerLagrange}).
\vspace{3mm}

{\bf Step $6$:} We have
$$\sum_{i} A_i^2 \ell_i^3 \geq \frac{\epsilon^2}{4}.$$
This follows from the constraint $\int_{\mathbb{S}^1} \theta'(s)\,ds = 2\pi$
(recall that $\theta$ is the angle coordinate of $\gamma$ in cylindrical coordinates with symmetry axis in the $e_3$ direction).  Indeed, since $\|h\|_{C^1(\mathbb{S}^1)} \leq C\epsilon$ and $h \geq \epsilon$ (recall that  $h = \gamma \cdot e_3$), we have for $\epsilon$ small that
$$\theta'(s) \geq (1 + \epsilon^2)^{1/2}\left(1 - \frac{3}{2}h'^2\right)^{1/2} \geq 1 + \frac{\epsilon^2}{4} - h'^2.$$
Integrating we conclude that
\begin{equation}\label{LengthDifference}
\int_{\mathbb{S}^1} h'^2\,ds \geq \frac{\epsilon^2}{4}.
\end{equation}
Suppose that $0\in \overline{I_i}$ is a minimum point for $h$, so that $h(0) \geq \epsilon$ and $h'(0) = 0$. Multiply \eqref{HeightODE} by $h'$ and integrate on $I_i$ to obtain 
$$h'^2(s) \leq \int_{0}^s A_i|h'|\,ds \leq A_i\ell_i^{1/2} \left(\int_0^{\ell_i} h'^2\,ds\right)^{1/2}.$$
Integrating again on $I_i$, we obtain
$$\int_{I_i} h'^2\,ds \leq A_i^2\ell_i^3.$$
Since $h' = 0$ on $\{h = \epsilon\}$, the claim follows from the inequality (\ref{LengthDifference}).

\vspace{3mm}

{\bf Step $7$:}
The contact set $\{\gamma\cdot e_3=\epsilon\}$ is nonempty.

Assume by contradiction that this is not the case. This implies that the ODE \eqref{ODE} holds with equality on the whole $\mathbb S^1$. Also, by Step 5, because $\ell_1=2\pi$ (there is only one interval where $\gamma$ detaches from the obstacle) we get that $|\kappa|\leq C\epsilon$.
Hence, integrating \eqref{ODE} on $\mathbb S^1$ and using Step 4,
we get (since $\int_{\mathbb S^1} \kappa''ds=0$ by periodicity)
$$
\biggl|\int_{\mathbb S^1} \kappa\,ds\biggr| =\frac{\Lambda^{-2}}{2}\int_{\mathbb S^1} \kappa^3 \leq C\epsilon^3.
$$
Then, integrating \eqref{HeightODE} and using that $|N\cdot e_3-1|\leq C\epsilon^2$ (since $|\gamma\cdot e_3|+|\gamma'\cdot e_3|\leq C\epsilon$) we get
$$
\int_{\mathbb S^1}h\,ds=\int_{\mathbb S^1}\kappa N\cdot e_3\,ds \leq \int_{\mathbb S^1}\kappa\,ds+C\epsilon^2\int_{\mathbb{S}^1}|\kappa|\,ds
\leq C\epsilon^3,
$$
where we used again that $|\kappa|\leq C\epsilon$.
However, this is a contradiction since $h=\gamma\cdot e_3 \geq \epsilon$ everywhere.

\vspace{3mm}

{\bf Step $8$:} We have
$$\Lambda \leq C.$$
Indeed, suppose by way of contradiction that $\Lambda^2 >>1$. The inequalities from Step $5$ and Step $6$ imply that some $\ell_i$ (say $\ell_1$) is larger than some universal constant $c_1 > 0$.
So, by Step 5 again, $A_1^2 \leq c_1^{-1}C\epsilon^2$, thus $|\kappa| < C\epsilon$
on $I_1$. The idea of the following argument is that if $\Lambda$ is too large, then $\kappa$ oscillates rapidly around $0$, so $\gamma$ follows a great
circle that is tangent to the obstacle from below, contradicting that $h \geq \epsilon$.

We now establish this rigorously. By Step 7, $I_1$ cannot coincide with the whole $\mathbb S^1$. Assume that $I_1$ starts at $0$. Then for some $s_0$, on $I_1$ we claim that
$$\kappa = C_0\epsilon \sin(\Lambda (s- s_0)) + O(\epsilon^3)$$
for universal $C_0$. (Here and below $O(\delta)$ indicates a function that is smaller in absolute value than $C \delta$ for universal $C$).

Indeed, since $|\kappa| \leq C\epsilon$ on $I_1$ we have $\kappa'' + \Lambda^2\kappa = O(\epsilon^3)$. Take $C_0$ and $s_0$ such that $C_0\epsilon \sin(\Lambda (s - s_0))$ has the same initial value and derivative 
as $\kappa$ at $0$.

We first claim that $C_0 \leq C$ universal.
Indeed, we note that 
$$
\kappa'(0)^2 + \Lambda^2\kappa(0)^2 = C_0^2\Lambda^2\epsilon^2\cos^2(\Lambda s_0)+C_0^2\Lambda^2\epsilon^2\sin^2(\Lambda s_0)=C_0^2\epsilon^2\Lambda^2.
$$
Hence, by the conservation law
$\kappa'^2 + \Lambda^2\kappa^2 + \kappa^4/4 = {\rm const}$,
if $\bar s \in I_1$ denotes a maximum point of $|\kappa|$ then (note that $\kappa'(\bar s)=0$)
$$
C_0^2\epsilon^2\Lambda^2=
\kappa'(0)^2+\Lambda^2\kappa(0)^2\leq 
\Lambda^2\kappa(\bar s)^2 + \frac{\kappa(\bar s)^4}4\leq C\Lambda^2\epsilon^2+C\epsilon^4,
$$
and the claim follows (recall that, by assumption, $\Lambda$ is large).

Now, consider the function $w(s) := \kappa(s) - C_0\epsilon\sin(\Lambda(s-s_0))$. Then $w$ solves
$$w'' + \Lambda^2 w = O(\epsilon^3),\quad w(0) = w'(0) = 0.$$
Multiplying by $w'$ and integrating we obtain that
$$w'^2(t) + \Lambda^2 w^2(t) < C\epsilon^3\int_0^t |w'|\,ds < Ct\epsilon^3 \max_{I_1}|w'|.$$
Choosing first $t \in I_1$ where $|w'|$ attains its maximum we obtain $\max_{I_1} |w'| \leq C\epsilon^3$. Then, combining this information with
the above inequality we get
$\max_{I_1} |w|  \leq C\Lambda^{-1}\epsilon^3 \leq C\epsilon^3$, that is
$$
\max_{I_1}|\kappa(s)-C_0\epsilon\sin(\Lambda(s-s_0))|\leq C\epsilon^3.
$$
Since $N \cdot e_3 = 1 + O(\epsilon^2)$ (by the inequality $\|h\|_{C^1(\mathbb{S}^1)} \leq C\epsilon$), it follows by \eqref{HeightODE} that
$$h'' + h = C_0\epsilon \sin(\Lambda (s-s_0)) + O(\epsilon^3).$$ 
Using the initial conditions $h(0) = \epsilon$ and $h'(0) = 0$ (since $0$ is a contact point) we obtain, in a similar way to above, that 
\begin{align*}
h(s) &= \frac{C_0 \epsilon}{1-\Lambda^2}\sin(\Lambda(s-s_0)) + \epsilon\left(1 + \frac{C_0}{1-\Lambda^2}\sin(\Lambda s_0)\right)\cos(s) \\
& - \frac{C_0 \epsilon\Lambda}{1-\Lambda^2}\cos(\Lambda s_0)\sin(s) + O(\epsilon^3)=\epsilon \cos(s)+O\biggl(\frac{\epsilon}{\Lambda}+\epsilon^3\biggr).
\end{align*}
Taking $s = \min\{\pi/2,c_1\}$ we get a contradiction to $h \geq \epsilon$ for $\Lambda$ sufficiently large.

\vspace{3mm}

{\bf Step $9$:} By Steps $4$ and $8$ we have
$$\max|\kappa| \leq C\epsilon.$$
Since the ODE for $\kappa$ on $I_i$ has the conserved quantity $\kappa'^2 + \Lambda^2 \kappa^2 + \kappa^4/4$, using that $\kappa'(s_i)=0$ if $\max_{I_i} |\kappa| = |\kappa(s_i)|$
we conclude that
$$
\max_{I_i}|\kappa'|^2 \leq \Lambda^2 \kappa(s_i)^2 + \frac{\kappa(s_i)^4}4 \leq C\epsilon^2\qquad \forall\,i.
$$
Since $\kappa'=0$ on the contact set, we conclude that
$|\kappa'| \leq C\epsilon$.
Recalling \eqref{HeightODE}, this proves that
$\|h\|_{C^{2,1}(\mathbb S^1)}\leq C\epsilon$, completing the proof.
\end{proof}

As a consequence of the $C^{2,1}$ estimate, we can show that for $\epsilon$ small, the minimizer $\gamma$ cannot lift from the obstacle on short intervals. In the following we take $\epsilon < \epsilon_0$ small
universal.

\begin{lem}\label{ShortBumps}
There exists $c_0 > 0$ universal such that if one of the intervals $I$ in $\{h > \epsilon\}$ satisfies $\mathcal{H}^1(I) < c_0$, then
$\kappa > \epsilon(1-\epsilon^2)^{-1/2}$ on $I$.
\end{lem}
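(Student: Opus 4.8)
The plan is to combine the behavior of $\kappa$ at the two endpoints of $I$ with the concavity of $\kappa$ forced by the Euler--Lagrange ODE, using the shortness of $I$ only to exclude a sign change of $\kappa$. Write $I=(a,b)$, $\ell:=\mathcal H^1(I)$, and $\kappa_0:=\epsilon(1-\epsilon^2)^{-1/2}$; recall from Lemma~\ref{EulerLagrange} and the $C^{2,1}$ estimate that $\gamma\in C^{2,1}$, that $\kappa''+(\Lambda^2+\tfrac{\kappa^2}{2})\kappa=0$ on $I\subset\{h>\epsilon\}$ with $\Lambda^2=1+\lambda$, that $\|h\|_{C^{2,1}}+\|\kappa\|_{C^{0,1}}\le C\epsilon$, and that $\Lambda\le C$. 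The first observation is that $\kappa\ge\kappa_0$ at every contact point. Indeed, if $h(s)=\epsilon$ then $s$ is a global minimum of $h$ (recall $h\ge\epsilon$), so $h'(s)=0$ and $h''(s)\ge 0$; moreover $(N\cdot e_3)^2=1-h^2-h'^2$ (a consequence of $|\gamma|=|\gamma'|=1$ and $\gamma\cdot\gamma'=0$), and since $|h|,|h'|\le C\epsilon$ this quantity never vanishes, so $N\cdot e_3$ has constant sign, fixed to be positive by the standard orientation; hence $N(s)\cdot e_3=\sqrt{1-\epsilon^2}$. Plugging into \eqref{HeightODE} gives $\kappa(s)=(h''(s)+\epsilon)/\sqrt{1-\epsilon^2}\ge\kappa_0$. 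In particular $\kappa(a),\kappa(b)\ge\kappa_0>0$, since $a,b$ are contact points.

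Next I would show that $\kappa>0$ on all of $\overline{I}$ provided $\ell<c_0$ for a suitable universal $c_0$. Suppose not. Since $\kappa$ is positive near $a$ and near $b$ by the previous step, and since $\kappa$ cannot vanish together with $\kappa'$ at an interior point of $I$ without being identically $0$ on $I$ (uniqueness for the ODE, whose nonlinearity is locally Lipschitz) --- which is impossible, as it would give $h(s)=\epsilon\cos(s-a)<\epsilon$ just past $a$ --- the curvature must be strictly negative on some open subinterval $(p,q)\Subset I$ with $\kappa(p)=\kappa(q)=0$. There $\mu:=-\kappa$ is positive, vanishes at both endpoints, and satisfies $\mu''=-(\Lambda^2+\tfrac{\mu^2}{2})\mu\ge-\omega^2\mu$, where $\omega^2:=\Lambda^2+\tfrac12\max_{(p,q)}\mu^2\le C^2+\tfrac12(C\epsilon)^2$. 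A Sturm comparison with $\sin(\omega(\cdot-p))$ --- via the Wronskian $W=\mu'v-\mu v'$, which vanishes at $p$ and satisfies $W'=(\mu''+\omega^2\mu)v\ge 0$ on $(p,p+\pi/\omega)$, whence $\mu/v$ is nondecreasing and stays positive --- shows that $\mu>0$ throughout $(p,p+\pi/\omega)$, so $q-p\ge\pi/\omega$, which is bounded below by a universal constant once $\epsilon$ is small. Choosing $c_0$ to be that constant contradicts $q-p\le\ell<c_0$. Thus $\kappa>0$ on $I$, and together with the endpoint bounds, on $\overline{I}$.

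Finally, with $\kappa>0$ on $\overline{I}$ the ODE forces $\kappa''=-(\Lambda^2+\tfrac{\kappa^2}{2})\kappa<0$ on $I$, so $\kappa$ is strictly concave there; it therefore lies strictly above the chord joining $(a,\kappa(a))$ and $(b,\kappa(b))$ on the open interval $I$, and that chord is at least $\min(\kappa(a),\kappa(b))\ge\kappa_0$. Hence $\kappa>\kappa_0=\epsilon(1-\epsilon^2)^{-1/2}$ on $I$, which is the claim. I expect the crux to be the second step: one must play the shortness of $I$ against the oscillatory ODE for $\kappa$, whose negative lobes have length bounded below whenever both $\Lambda$ and the amplitude of $\kappa$ are bounded --- precisely the phenomenon behind the uniform bound $\Lambda\le C$ proved earlier. (One can avoid the Sturm comparison altogether by instead integrating \eqref{HeightODE} over $I$: using $h'(a)=h'(b)=0$, $h\ge\epsilon$ on $I$, and $N\cdot e_3\le\sqrt{1-\epsilon^2}$, one gets $\int_I\kappa=\kappa_0\ell\,(1+O(\ell))$, which combined with $\|\kappa'\|_{L^\infty}\le C\epsilon$ yields $\kappa>0$ on $\overline{I}$ for $\ell$ small.)
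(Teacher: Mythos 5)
Your proof is correct, and its overall skeleton (endpoint bound, then concavity of $\kappa$ where positive) matches the paper's, but the key quantitative step is carried out by a genuinely different mechanism. The paper argues by contradiction in one stroke: if $\kappa\le\epsilon(1-\epsilon^2)^{-1/2}$ somewhere in $I$, strict concavity where positive forces $\kappa<0$ somewhere, so ${\rm osc}_I\,\kappa\ge\epsilon(1-\epsilon^2)^{-1/2}$, and then the Lipschitz bound $\|\kappa'\|_{L^\infty}\le C\epsilon$ from Step 9 gives $\mathcal H^1(I)\ge C^{-1}$ by the mean value theorem. You instead first exclude a negative lobe of $\kappa$ on a short interval via a Sturm comparison, using only $\Lambda\le C$ (Step 8) and $\|\kappa\|_\infty\le C\epsilon$ to bound the frequency $\omega$, so that consecutive zeros of $\kappa$ are at least $\pi/\omega\ge c_0$ apart; positivity plus strict concavity and the chord then finish the proof. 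Your route uses marginally weaker input (it does not need the sharp $\epsilon$-order bound on $\kappa'$, only boundedness of $\Lambda$ and of the amplitude), at the cost of the extra ODE comparison; the paper's oscillation argument is more elementary but leans on the full Step 9 estimate. Your derivation of the endpoint inequality $\kappa\ge\epsilon(1-\epsilon^2)^{-1/2}$ from \eqref{HeightODE} and $(N\cdot e_3)^2=1-h^2-h'^2$ at a global minimum of $h$ is a clean analytic substitute for the paper's geometric "obstacle touches from below" comparison, and your parenthetical alternative (integrating \eqref{HeightODE} over $I$) is essentially a third valid route, closest in spirit to the paper's since it again relies on $\|\kappa'\|_{L^\infty}\le C\epsilon$.
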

\begin{proof}
A short computation shows that the curvature of the obstacle $\mathbb{S}^2 \cap \{x_3 = \epsilon\}$ is $\epsilon(1-\epsilon^2)^{-1/2}$. Since the obstacle touches $\gamma$ from below on contact points,
we have $\kappa \geq \epsilon(1-\epsilon^2)^{-1/2}$ at the endpoints of $I$. 

Assume that $\kappa \leq \epsilon(1-\epsilon^2)^{-1/2}$
somewhere in $I$.
Since $\kappa$ is strictly concave where it is positive (see Step 5 above in the proof of the $C^{2,1}$ estimates), it has to become negative somewhere inside $I$ (otherwise it could not reach the value $\epsilon(1-\epsilon^2)^{-1/2}$ at the boundary points of $I$), implying that
$$
{\rm osc}_I \kappa \geq \epsilon(1-\epsilon^2)^{-1/2}.
$$
However, by the mean value theorem
$$
{\rm osc}_I \kappa \leq \mathcal H^1(I)\max_I|\kappa'|.
$$
Since $|\kappa'| \leq C\epsilon$ (see Step 9 above),
the two inequalities above imply
$$
\mathcal H^1(I)\geq C^{-1},
$$
as desired.
\end{proof}

\begin{lem}\label{TouchingInequality}
If an interval $I$ in $\{\alpha(\theta) > \epsilon\}$ satisfies $\mathcal{H}^1(I) < \pi,$ then $\kappa \leq \epsilon(1-\epsilon^2)^{-1/2}$ somewhere in $I$.
\end{lem}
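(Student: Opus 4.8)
The plan is to argue by contraposition: supposing $\kappa>\epsilon(1-\epsilon^2)^{-1/2}=:\kappa_\epsilon$ on all of $I$, I would deduce $\mathcal H^1(I)\ge\pi$. Write $I=(a,b)$. Since $\kappa>\kappa_\epsilon>0$ throughout $I$, the Euler--Lagrange ODE \eqref{ODE}, which holds with equality on $I$, gives $\kappa''=-(\Lambda^2+\kappa^2/2)\kappa<0$, so $\kappa$ is concave on $[a,b]$; combined with $\kappa(a),\kappa(b)\ge\kappa_\epsilon$ (the contact-point inequality recalled in the proof of Lemma~\ref{ShortBumps}), this forces $\kappa\ge\kappa_\epsilon$ on all of $[a,b]$. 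At the endpoints of $I$ one has $\alpha(a)=\alpha(b)=\epsilon$ and, since $\alpha\ge\epsilon$ with $\alpha\in C^{2,1}$ (Lemma~\ref{EulerLagrange} and the $C^{2,1}$ estimate), also $\alpha'(a)=\alpha'(b)=0$.

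Next I would translate the curvature bound into a second-order differential inequality for the height $\alpha$ on $I$. Using the exact relation between the geodesic curvature and a graph on $\mathbb S^2$ — equivalently the arc-length identity $h''+h=\kappa\sqrt{1-h^2-h'^2}$ from \eqref{HeightODE} — together with the corresponding relation $\epsilon=\kappa_\epsilon\sqrt{1-\epsilon^2}$ for the obstacle circle and the bound $\|\alpha\|_{C^{2,1}(\mathbb S^1)}\le C\epsilon$, one obtains on $I$ an inequality of the form
\[
\alpha''+\alpha\;>\;\kappa_\epsilon\bigl(1-\alpha^2\bigr)^{3/2}+R,\qquad |R|\le C\epsilon\bigl((\alpha-\epsilon)^2+(\alpha')^2\bigr),
\]
so that the remainder $R$ is of strictly lower order than $\alpha-\epsilon$ itself, uniformly in $\epsilon$. (This is exactly the mechanism of the third Remark: on $\mathbb S^2$ the contributions of $\alpha$ and $\alpha''$ to the curvature are comparable, so the relevant operator is $\partial_\theta^2+1$, not $\partial_\theta^2$ — which is where the threshold $\pi$ will enter.)

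Finally I would test this inequality against the first Dirichlet eigenfunction $\psi(\theta)=\sin\!\bigl(\pi(\theta-a)/\mathcal H^1(I)\bigr)\ge0$ of $-\partial_\theta^2$ on $I$, which satisfies $\psi''=-\mu\psi$ with $\mu=\bigl(\pi/\mathcal H^1(I)\bigr)^2$. Multiplying by $\psi$, integrating over $I$, and integrating by parts twice — the boundary terms being pinned down by $\alpha(a)=\alpha(b)=\epsilon$, $\alpha'(a)=\alpha'(b)=0$, $\psi(a)=\psi(b)=0$ — reduces the left side to $(1-\mu)\int_I(\alpha-\epsilon)\psi$ up to the harmless $R$-contribution, while the right side is bounded below by a positive multiple of $\int_I\psi$ coming from the excess $\kappa-\kappa_\epsilon>0$ and the obstacle value $\kappa_\epsilon>0$. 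If $\mathcal H^1(I)<\pi$ then $\mu>1$, so the left side is $\le0$; this contradicts the strict positivity of the right side, and hence $\mathcal H^1(I)\ge\pi$. (Equivalently, one can phrase the last step as a maximum principle: when $\mathcal H^1(I)<\pi$ the operator $\partial_\theta^2+1$ obeys the maximum principle on $I$, so $\alpha''+\alpha>0$ with $\alpha=\epsilon$ on $\partial I$ forces $\alpha$ below the solution of $w''+w=0$, $w|_{\partial I}=\epsilon$, and the strict curvature inequality upgrades this to a genuine contradiction with $\alpha\ge\epsilon$.)

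I expect the main obstacle to be the second step. A naive linearization $\kappa\approx\alpha''+\alpha$ is insufficient here, since its error is of order $\epsilon^3$, which is exactly the size of $\kappa_\epsilon-\epsilon$ — the gain one must exploit — so one has to keep the exact curvature--height relation and use that the corrections are quadratic in $\alpha-\epsilon$ and $\alpha'$, and in particular absorb the unfavourable sign of the $(\alpha')^2$-type term. The $C^{2,1}$ bound $\|\alpha\|_{C^{2,1}}\le C\epsilon$ (and, if needed, the energy minimality of $\gamma$ together with the conserved quantity \eqref{eq:energy cons}) are what should make these corrections negligible and close the argument.
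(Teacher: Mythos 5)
Your proposal follows a genuinely different route from the paper, and it contains a gap that I do not see how to close. The paper's proof is a short geometric argument: rotate the family of circles $K_{\varphi} := \{\xi_{\varphi}\cdot x = \epsilon\} \cap \mathbb{S}^2$ (all of curvature $\kappa_{\epsilon} := \epsilon(1-\epsilon^2)^{-1/2}$) about an axis passing through the side of the sphere opposite to $I$. Each $K_{\varphi}$ meets the obstacle only at points with $\theta$-coordinate in $(\pi/2,\,3\pi/2)$, i.e., outside $I$; since $\gamma$ sits on the obstacle outside $I$, some $K_{\varphi_0}$ first touches $\gamma$ from above at a point with $\theta$-coordinate in $I$, and the elementary touching principle gives $\kappa \leq \kappa_{\epsilon}$ there. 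No error tracking is needed.

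Your contrapositive PDE/eigenvalue argument runs into trouble exactly where you flag it, and I do not think the difficulty is merely technical. Setting $u := h - \epsilon \geq 0$, the exact relation $h'' + h = \kappa\sqrt{1-h^2-h'^2}$ together with $\kappa > \kappa_{\epsilon}$ and the identity $\epsilon = \kappa_{\epsilon}\sqrt{1-\epsilon^2}$ gives, after the leading $O(\epsilon)$ and $O(\epsilon^3)$ terms cancel, something of the form
$$
u'' + \bigl(1 + O(\epsilon^2)\bigr)u \;>\; -\tfrac{1}{2}\kappa\,(h')^2 + O(\epsilon^5),
$$
and the term $-\tfrac{1}{2}\kappa (h')^2$ has the unfavorable sign. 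Testing with the first Dirichlet eigenfunction $\psi$ and integrating by parts (the boundary terms do vanish thanks to $u = u' = 0$ and $\psi = 0$ at $\partial I$) produces $(1 - \mu + O(\epsilon^2))\int_I u\psi > -C\epsilon\int_I (h')^2\psi + O(\epsilon^5)\int_I\psi$, in which both sides are nonpositive whenever $|I| < \pi$: no contradiction. The source of the trouble is that the margin you want to exploit, $\kappa_{\epsilon} - \epsilon = \epsilon^3/2 + O(\epsilon^5)$, is the same order $\epsilon^3$ as the $(h')^2$ correction, and that the threshold $|I| = \pi$ is sharp (as the excess $\kappa - \kappa_{\epsilon}$ tends to $0$, the tilted circles $K_{\varphi}$ with $\varphi \to 0$ realize separation intervals of $\theta$-length tending to $\pi$). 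So the corrections are not negligible, and I do not see how the $C^{2,1}$ bound or the conserved quantity \eqref{eq:energy cons} would let you absorb them. Even the passage between the arc-length variable $s$ (in which \eqref{ODE} and \eqref{HeightODE} hold) and the angular variable $\theta$ (in which the lemma is stated) costs another $O(\epsilon^2)$, which again eats into the same sharp threshold. You would need an additional idea — for instance a direct comparison with the sliding family of small circles, which is precisely the paper's argument — to make the contrapositive route close.
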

\begin{proof}
The proof is by elementary geometry on $\mathbb{S}^2$ and by maximum principle.

Suppose that $I$ is centered at $\theta = 0$. Let $\xi_{\varphi} \in \mathbb{S}^2$ parametrize the half great-circle of vectors in $\mathbb{S}^2$ with 
$\theta$-coordinate equal to $\pi$ and angle $\varphi \in (0,\, \pi)$ from the $e_3$ axis.
Note that the circles $K_{\varphi} := \{\xi_{\varphi} \cdot x = \epsilon\} \cap \mathbb{S}^2$ intersect the obstacle $K_0$ at points
with $\theta$-coordinate in $(\pi/2,\, 3\pi/2)$. It follows that, for some $\varphi_0 \in (0,\, \pi/2)$, the circle $K_{\varphi_0}$ touches $\gamma$ from above locally at a point with $\theta$ coordinate in $I$
(see Figure \ref{ShortBumps_Pic}). Then,
since the curvature of $K_{\varphi_0}$ is $\epsilon(1-\epsilon^2)^{-1/2}$, the desired inequality holds at this contact point.
\end{proof}

 \begin{figure}
 \centering
    \includegraphics[scale=0.25]{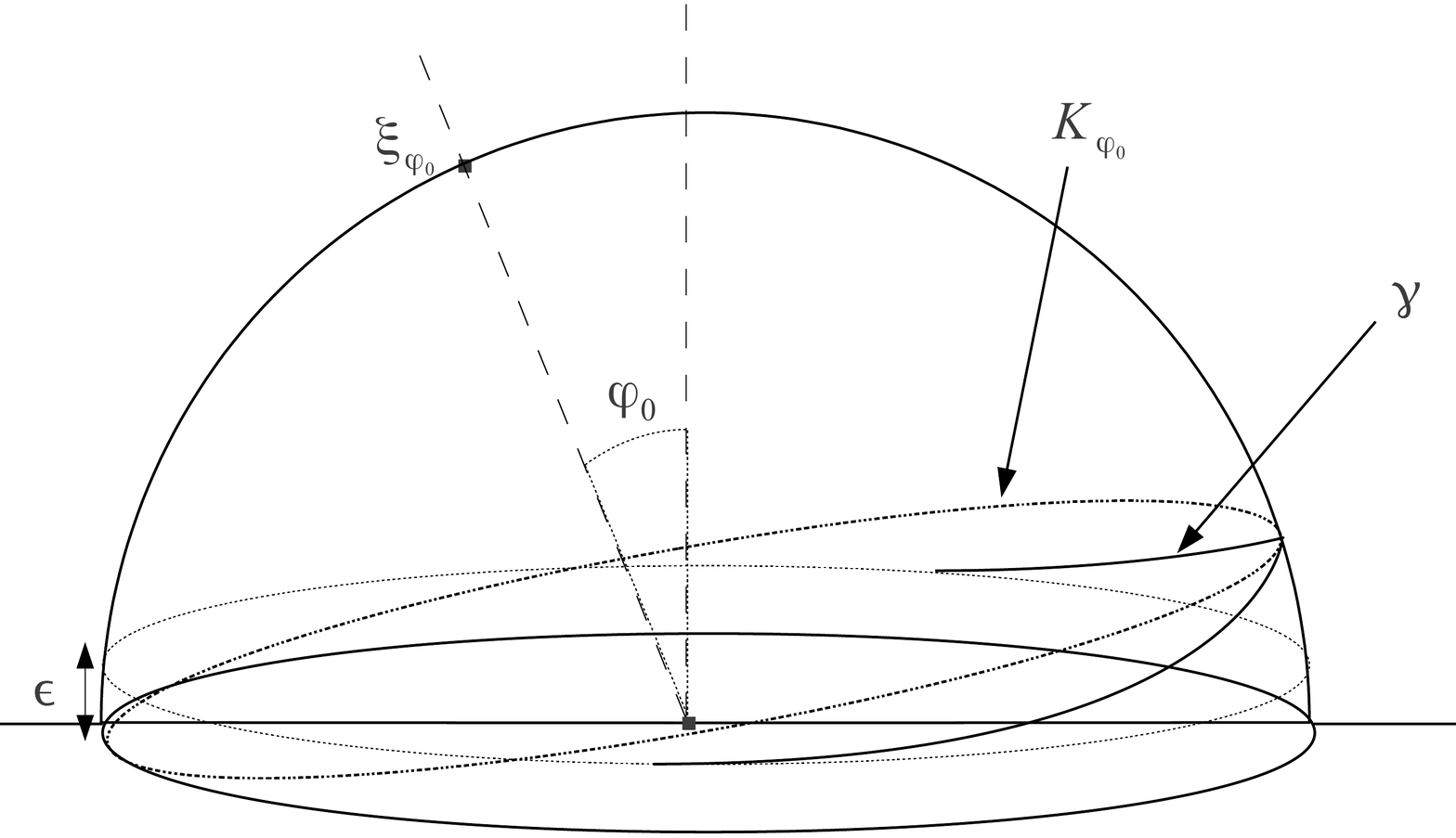}
 \caption{}
 \label{ShortBumps_Pic}
\end{figure}

As a consequence of Lemmas \ref{ShortBumps} and \ref{TouchingInequality} we have:
\begin{prop}\label{BumpLength}
There exists a universal $c_0 > 0$ such that for all $\epsilon < \epsilon_0$ universal, the intervals $I_i$ that comprise $\{h > \epsilon\}$ satisfy $\mathcal{H}^1(I_i) \geq c_0$.
\end{prop}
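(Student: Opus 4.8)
The plan is to combine Lemma~\ref{ShortBumps} and Lemma~\ref{TouchingInequality} directly, using them as a dichotomy. Let $I_i$ be one of the intervals comprising $\{h > \epsilon\}$, and recall that in the parametrization by $\theta$ these correspond to intervals in $\{\alpha(\theta) > \epsilon\}$; by the $C^{2,1}$ estimate \eqref{eq:C21 eps} the change of variables $s \leftrightarrow \theta$ is bi-Lipschitz with constants close to $1$ for $\epsilon$ small, so it suffices to get a uniform lower bound for the length of $I_i$ measured in either variable. Suppose for contradiction that $\mathcal{H}^1(I_i) < c_0$, where $c_0$ is the universal constant from Lemma~\ref{ShortBumps}; we may also assume $c_0 < \pi$ without loss of generality, so that $I_i$ is short enough for Lemma~\ref{TouchingInequality} to apply as well.

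The contradiction is then immediate: Lemma~\ref{ShortBumps} gives $\kappa > \epsilon(1-\epsilon^2)^{-1/2}$ everywhere on $I_i$, while Lemma~\ref{TouchingInequality} gives $\kappa \leq \epsilon(1-\epsilon^2)^{-1/2}$ somewhere on $I_i$. These cannot both hold, so $\mathcal{H}^1(I_i) \geq c_0$ for every $i$. The only point requiring a little care is bookkeeping the two slightly different length thresholds (the $c_0$ of Lemma~\ref{ShortBumps} and the $\pi$ of Lemma~\ref{TouchingInequality}) and the passage between the arclength parameter $s$ and the angular parameter $\theta$; replacing $c_0$ by $\min\{c_0, \pi/2\}$ and absorbing the bi-Lipschitz distortion into the constant handles this cleanly. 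I do not expect any serious obstacle here — the substance of the argument lies entirely in the two preceding lemmas, and this proposition is essentially their formal synthesis.

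Here is the proof.

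\begin{proof}
Let $c_0 > 0$ be the universal constant provided by Lemma~\ref{ShortBumps}, and assume without loss of generality that $c_0 < \pi$ (otherwise replace it by $\min\{c_0,\pi/2\}$). Fix $\epsilon < \epsilon_0$ universal, and let $I_i$ be one of the intervals that comprise $\{h > \epsilon\}$. By the $C^{2,1}$ estimate \eqref{eq:C21 eps}, for $\epsilon$ small the map $\theta \mapsto s$ is bi-Lipschitz with constants arbitrarily close to $1$; hence it suffices to bound the length of $I_i$ from below in the arclength parameter $s$.

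Suppose, for contradiction, that $\mathcal{H}^1(I_i) < c_0$. Since $c_0 < \pi$, the interval $I_i$ satisfies the hypothesis of Lemma~\ref{TouchingInequality} as well, so $\kappa \leq \epsilon(1-\epsilon^2)^{-1/2}$ at some point of $I_i$. On the other hand, since $\mathcal{H}^1(I_i) < c_0$, Lemma~\ref{ShortBumps} yields $\kappa > \epsilon(1-\epsilon^2)^{-1/2}$ at every point of $I_i$. These two conclusions are incompatible, which is the desired contradiction. Therefore $\mathcal{H}^1(I_i) \geq c_0$ for every $i$ (after adjusting $c_0$ by the bi-Lipschitz factor if one works with the $\theta$-parametrization).
\end{proof}
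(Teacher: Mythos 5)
Your proof is correct and is essentially the paper's own argument: assume an interval is shorter than the $c_0$ of Lemma~\ref{ShortBumps}, note that (by the $C^1$ bound on $h$, equivalently the near-isometry between the $s$ and $\theta$ parametrizations) the subtended angle is then less than $\pi$, and derive the contradiction between the strict inequality of Lemma~\ref{ShortBumps} and the reverse inequality of Lemma~\ref{TouchingInequality}. The bookkeeping of the two parametrizations is handled the same way the paper does it, so there is nothing to add.
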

\begin{proof}
Assume by way of contradiction that $\mathcal H^1(I_1) < c_0$ small. By Lemma \ref{ShortBumps} we have that $\kappa > \epsilon(1-\epsilon^2)^{-1/2}$ on $I_1$. Since $\|h\|_{C^{1}(\mathbb{S}^1)} < C\epsilon$, the angle 
in the $\theta$ variable subtended by this interval is at most
$2c_0 < \pi$ for $\epsilon$ small. This is a contradiction of Lemma \ref{TouchingInequality}.
\end{proof}

In particular, for $\epsilon < \epsilon_0$ small universal, the set $\{h > \epsilon\}$ consists of finitely many intervals of length at least $c_0$.

\vspace{3mm}


\subsection{Linear Problem}
Let $\gamma_{\epsilon}$ be a minimizer in $Y_{\epsilon}$ of $F$, with geodesic curvature $\kappa_{\epsilon}$. Let $h_{\epsilon}(s) = \gamma_{\epsilon}(s) \cdot e_3$, and let $\Lambda_{\epsilon}^2 = 1 + \lambda_{\epsilon}$ be
the Lagrange multiplier.

We consider the problem obtained by ``stretching the picture vertically.'' So, we set 
$$\tilde{h}_{\epsilon} := \epsilon^{-1}h_{\epsilon}\quad \text{and} \quad \tilde{\kappa}_{\epsilon} := \epsilon^{-1}\kappa_{\epsilon}.$$ 
By the $C^{2,1}$ estimate proved in the previous section, there exists a universal constant $C$ such that
$$\tilde{h}_{\epsilon} \geq 1,\, \|\tilde{h}_{\epsilon}\|_{C^{2,1}(\mathbb{S}^1)} \leq C$$
$$\tilde{h}_{\epsilon}'' + \tilde{h}_{\epsilon} = \tilde{\kappa}_{\epsilon} + O(\epsilon^2),$$
$$\tilde{\kappa}_{\epsilon}'' + (\Lambda_{\epsilon}^2 + \epsilon^2 \tilde{\kappa}_{\epsilon}^2 / 2)\tilde{\kappa}_{\epsilon} \geq 0\quad \text{with equality where } \tilde{h}_{\epsilon} > 1,$$ 
$$\Lambda_{\epsilon}^2 \in (C^{-1},\,C).$$
Moreover, recalling the identity \eqref{eq:theta h}, the length constraint $\int_{0}^{2\pi} \theta'(s)\,ds = 2\pi$ reads
$$\int_{\mathbb{S}^1} (\tilde{h}_{\epsilon}^2 -\tilde{h}_{\epsilon}'^2)\,ds = O(\epsilon^2).$$

In the limit that $\epsilon \rightarrow 0$, the functions $\tilde{h}_{\epsilon}$ and $\tilde{\kappa}_{\epsilon}$ converge (up to taking a subsequence) in $C^2$ (resp. $C^0$) to a solution of the following linear problem:
\begin{equation}\label{PreciseLinearProblem}
\text{Linear Problem} = \begin{cases}
h \geq 1, \, h \in C^{2,1}(\mathbb{S}^1) \\
h'' + h = \kappa \\
\kappa'' + \Lambda^2 \kappa \geq 0 \text{ and } = 0 \text{ where } h > 1 \\
\int_{\mathbb{S}^1} (h'^2 - h^2)\,ds = 0.
\end{cases}
\end{equation}

We now describe precisely the minimizers of $\int_{\mathbb{S}^1} \kappa^2\,ds$ over all $h,\,\kappa$ satisfying the linearized problem.
Note that this result was already numerically predicted in \cite{CM}.

\begin{prop}\label{LinearMinimizers}
Let $h,\, \kappa$ be a minimizer of $\int_{\mathbb{S}^1} \kappa^2\,ds$ over all pairs that solve the linear problem (\ref{PreciseLinearProblem}). Then $h = 1$ on $\mathbb{S}^1 \backslash I$, where $I$ is an open interval of length between $2.42$ and $2.43$. Furthermore, 
$\Lambda \in (3.79,3.82)$, and $h$ and $\kappa$ are given by explicit formulae (in particular, they are unique), see Figures \ref{h_pic} and \ref{curvature_pic}.
\end{prop}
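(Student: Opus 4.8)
The plan is to solve the linear problem \eqref{PreciseLinearProblem} essentially by hand, exploiting that the ODE for $\kappa$ is linear with constant coefficients on the non-contact set. First I would observe that by the same reasoning as in Step 1 and Step 5 of the $C^{2,1}$ estimate (Crofton/length considerations, concavity of $\kappa$ where positive), a minimizer must detach from the obstacle $\{h=1\}$ on at least one interval, and I claim it detaches on exactly one interval $I$; this is forced by a competitor/symmetrization argument, since merging several bumps into a single one of the same total length decreases $\int\kappa^2$ while keeping the length constraint $\int(h'^2-h^2)=0$ satisfiable. So write $I=(-L/2,L/2)$ up to rotation, with $\kappa''+\Lambda^2\kappa=0$ on $I$, hence $\kappa(s)=a\cos(\Lambda s)+b\sin(\Lambda s)$ on $I$, and $\kappa=0$ on $\mathbb{S}^1\setminus I$ (since there $h=1$ forces $\kappa=h''+h=1$... — actually on $\{h=1\}$ one has $h''=0$ so $\kappa=1$ there; I must keep careful track: the obstacle is $h\equiv 1$ so $\kappa\equiv 1$ on the contact set, and the matching conditions at $\partial I$ are continuity of $h,h',h'',h'''$, equivalently of $\kappa$ and $\kappa'$). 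By the reflection symmetry of the problem I would take $\kappa$ even, so $b=0$ and $\kappa(s)=a\cos(\Lambda s)$ on $I$ with the matching $a\cos(\Lambda L/2)=1$ and, from $C^1$-matching of $\kappa$, $-a\Lambda\sin(\Lambda L/2)=0$, forcing $\Lambda L/2=\pi$ (taking the first branch), i.e. $\Lambda L = 2\pi$ and $a=-1$ so that $a\cos(\pi)=1$.

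Next I would solve $h''+h=\kappa$ on $I$ with $\kappa(s)=-\cos(\Lambda s)$ and $\Lambda\ne 1$: the particular solution is $\frac{-1}{1-\Lambda^2}\cos(\Lambda s)$, so $h(s)=\frac{-1}{1-\Lambda^2}\cos(\Lambda s)+c_1\cos s+c_2\sin s$ on $I$, with $c_2=0$ by evenness; outside $I$, $h\equiv 1$. Imposing $C^1$-matching of $h$ at $s=L/2$ (continuity of $h$ and $h'$ — note $h''$ and $h'''$ then match automatically since $\kappa,\kappa'$ match and $\kappa$ is continuous) gives two equations:
\begin{align*}
\frac{-1}{1-\Lambda^2}\cos(\Lambda L/2)+c_1\cos(L/2) &= 1,\\
\frac{\Lambda}{1-\Lambda^2}\sin(\Lambda L/2)-c_1\sin(L/2) &= 0.
\end{align*}
Using $\Lambda L/2=\pi$ these reduce to $\frac{1}{1-\Lambda^2}+c_1\cos(L/2)=1$ and $-c_1\sin(L/2)=0$. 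The second forces $\sin(L/2)=0$, hence $L=2\pi$?? — that contradicts $L<2\pi$; so the even branch with $b=0$ and the naive matching is \emph{over}determined, which tells me the correct ansatz must drop the evenness of $\kappa$ or, more likely, that $h'$ need not vanish at $\partial I$ only if I also use the free boundary (the contact set has its own endpoint conditions coming from the variational inequality, which here is just $\kappa\ge 1$ at the detachment points, with equality). The resolution is that at a detachment point one has $\kappa=1$ \emph{and} the free-boundary (smooth-fit) condition $\kappa'=0$ from one side is replaced by matching to the obstacle; I would instead parametrize $\kappa(s)=1+w(s)$ with $w$ solving $w''+\Lambda^2 w=-\Lambda^2$ on $I$, $w(\pm L/2)=0$, and then the remaining free parameter is fixed by the length constraint $\int_{\mathbb{S}^1}(h'^2-h^2)=0$; this integral constraint is what pins down $L$ (and hence $\Lambda$) to the stated numerical window.

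The main obstacle — and the place I expect to have to work — is the last step: extracting the numbers $L=|I|\in(2.42,2.43)$ and $\Lambda\in(3.79,3.82)$. After solving the linear ODEs with the matching conditions I will have $h$ and $\kappa$ as explicit trigonometric expressions depending on the two parameters $(L,\Lambda)$, subject to two scalar equations: one algebraic matching relation linking $\Lambda$ and $L$ (from $C^1$-fit of $h$ and $\kappa$ at $\partial I$, after correctly accounting for the contact-set conditions), and the quadratic length constraint $\int_{\mathbb S^1}(h'^2-h^2)\,ds=0$. Eliminating one parameter yields a single transcendental equation $G(L)=0$; I would then (i) show $G$ has a unique root in a bracketing interval by checking sign changes of $G$ and monotonicity of $G'$ on that interval — this is the "uniquely characterized" claim — and (ii) evaluate $G$ at the endpoints $2.42$ and $2.43$ to certify the sign change, and similarly for $\Lambda$ via $\Lambda=\Lambda(L)$. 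Finally I would record that, among solutions of \eqref{PreciseLinearProblem} (including multi-interval ones, excluded by the competitor argument of the first paragraph, and the trivial $h\equiv1$, $\kappa\equiv1$ which has energy $2\pi$ and does not satisfy the length constraint $\int(h'^2-h^2)=0$ unless... — I must double-check: $h\equiv1$ gives $\int(0-1)=-2\pi\ne0$, so it is inadmissible, confirming that detachment is mandatory), the single-bump solution just constructed is the unique minimizer, and plot $h,\kappa$ (Figures \ref{h_pic}, \ref{curvature_pic}).
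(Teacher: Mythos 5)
Your plan for the single-interval case is essentially the paper's: solve $\kappa'' + \Lambda^2\kappa = 0$ and $h'' + h = \kappa$ explicitly on the detachment interval, match to the obstacle, and combine the resulting transcendental relation between $\Lambda$ and $|I|$ with the length constraint $\int_{\mathbb{S}^1}(h'^2 - h^2)\,ds = 0$ to bracket the root numerically. One warning on that part: since $h \in C^{2,1}$ you get continuity of $h$, $h'$, $h''$ at the free boundary, i.e. $h=1$, $h'=0$ and $\kappa=1$ there, but \emph{not} continuity of $\kappa'$ (for the actual minimizer $\kappa'$ jumps at $\partial I$ and $\kappa$ is only Lipschitz, cf. Figure \ref{curvature_pic}). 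Your first attempt imposes $\kappa'(\pm L/2)=0$ and correctly runs into an overdetermined system, but your "resolution" still invokes a "$C^1$-fit of $\kappa$", so you have not cleanly identified the right conditions. The three conditions $h=1$, $h'=0$, $\kappa=1$ at $\pm s_0$, imposed on the two free constants of the (automatically even) solution, are exactly what yield the relation $\tan(\Lambda s_0)/(\Lambda s_0)=\tan(s_0)/s_0$, which together with the length constraint pins down $(s_0,\Lambda)$.

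The genuine gap is the reduction to a single interval. You dismiss multi-bump configurations by a "competitor/symmetrization argument, since merging several bumps into a single one of the same total length decreases $\int\kappa^2$." This does not work as stated: the profile of $(h,\kappa)$ on a bump of half-length $s_i$ is completely determined by $(s_i,\Lambda)$ through the matching conditions, $\Lambda$ is a single global multiplier shared by all bumps, and the bump lengths are coupled through both the relation $\tan(\Lambda s_i)/(\Lambda s_i)=\tan(s_i)/s_i$ and the integral constraint; "merging" two bumps therefore does not produce an admissible solution of (\ref{PreciseLinearProblem}), and no monotonicity of the energy under such an operation is available. The paper's route is different and genuinely needed: the constraint first forces $s_i<\pi/2$ and then $s_i\leq s_c<1.225$; an explicit upper bound ($\leq 67.4$) on the minimal energy, obtained from the one-bump candidate, forces $\Lambda\leq 4.64$, which selects the first branch $\Lambda=\Lambda_{1,s_i}$ of the transcendental relation for every $i$ and hence forces all bumps to have equal length; the constraint then gives $\bar s>\pi/3$, so $N\leq 2$; and a quantitative lower bound ($>80$) on the $N=2$ energy excludes it. Without some version of this chain, the "exactly one interval" conclusion --- the main content of the proposition --- is unproved. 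The numerical windows for $|I|$ and $\Lambda$ also still need to be certified by the sign checks you only outline.
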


 \begin{figure}
 \centering
    \includegraphics[scale=0.25]{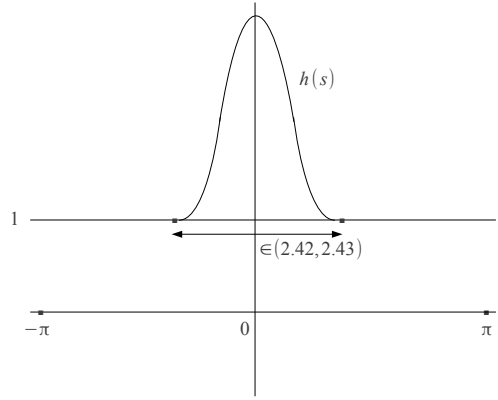}
 \caption{For a minimizer, $h$ lifts from the obstacle on exactly one interval.}
 \label{h_pic}
\end{figure}

 \begin{figure}
 \centering
    \includegraphics[scale=0.25]{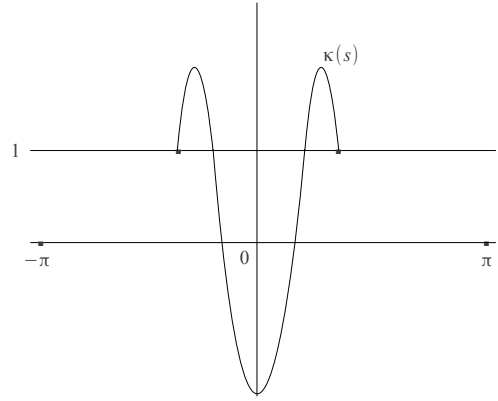}
 \caption{The function $\kappa$ is Lipschitz and solves the ODE $\kappa'' + \Lambda^2 \kappa = 0$ on the interval $\{h > 1\}$.}
 \label{curvature_pic}
\end{figure}

\begin{proof}
First, we claim that the contact set $\{h = 1\}$ is nontrivial. If not, integrating the equations $h''+h=\kappa$ and $\kappa''+\Lambda^2\kappa=0$, it follows by periodicity that
$$\int_{\mathbb{S}^1} h\,ds = \int_{\mathbb{S}^1} \kappa\,ds =-\Lambda^{-2}\int_{\mathbb S^1}\kappa''ds= 0,
$$ contradicting that $h \geq 1$.

\vspace{3mm}

On an interval $(-s_0,\,s_0) \subset \{h > 1\}$ with $h = 1$ at the endpoints, one explicitly solves the equations for $\kappa$ and $h$ to obtain
\begin{equation}
\label{eq:formulas k h}
\kappa = \frac{\cos(\Lambda s)}{\cos(\Lambda s_0)}, \quad h(s) = \frac{\sin(s_0)\cos(\Lambda s) - \Lambda \sin(\Lambda s_0)\cos(s)}{\sin(s_0)\cos(\Lambda s_0) - \Lambda \sin(\Lambda s_0)\cos(s_0)}.
\end{equation}
By the $C^{2,1}$ regularity for $h$ and the ODE $h'' + h = \kappa$ we get the relation
\begin{equation}
\label{eq:relation s L}
\frac{\tan(\Lambda s_0)}{\Lambda s_0} = \frac{\tan(s_0)}{s_0}.
\end{equation}
Since $s_0 < \pi$ and $\tan(z)/z$ is injective on $(0,\,\pi)$, it follows that $\Lambda \geq 1$. Furthermore, the ODE has no nontrivial solutions when $\Lambda = 1$. We conclude that
$$\Lambda > 1.$$

The set $\{h > 1\}$ consists of open intervals $I_i$ of length $2s_i$. Using the above computations, we rewrite the constraint $\int_{\mathbb{S}^1} (h^2 - h'^2)\,ds = 0$. Using that $h = 1$ and $h' = 0$
on $\{h = 1\}$, as well as the explicit formula for $h$ given in \eqref{eq:formulas k h}, we see that the constraint equation is equivalent to
$$ 2\pi+\sum_i(\Delta_i - 2s_i)=0,$$
where
$$ \Delta_i := \frac{s_i^3\sec^2(\Lambda s_i)-\tan(s_i)[2\Lambda^2 s_i^2-s_i^2]}{(1-\Lambda^2)s_i^2} $$
and (recall \eqref{eq:relation s L})
\begin{equation}
\label{eq:relation s L i}
\frac{\tan(s_i)}{s_i}=\frac{\tan(\Lambda s_i)}{\Lambda s_i}.
\end{equation}
Since
$$\sec^2(\Lambda s_i)=1+\tan^2(\Lambda s_i)=1+\frac{\tan^2(\Lambda s_i)}{\Lambda^2 s_i^2}\Lambda^2 s_i^2=1+\frac{\tan^2(s_i)}{s_i^2}\Lambda^2 s_i^2=1+\Lambda^2 \tan^2(s_i),$$
we can rewrite
\begin{align*}
\Delta_i-2s_i&=\frac{s_i^3[1+\Lambda^2 \tan^2(s_i)]-\tan(s_i)[2\Lambda^2s_i^2-s_i^2]-2s_i^3+2\Lambda^2s_i^3}{(1-\Lambda^2)s_i^2}\\
&=-\frac{\Lambda^2 s_i\tan^2(s_i)-(2\Lambda^2-1)[\tan(s_i)-s_i]}{\Lambda^2-1}.
\end{align*}
Thus, the constraint can be rewritten as
\begin{equation}\label{LinearConstraint}
\Lambda^2=\frac{2\pi +\sum_i[\tan(s_i)-s_i]}{2\pi -\sum_i[s_i\tan^2(s_i)-2(\tan(s_i)-s_i)] }.
\end{equation}

\vspace{3mm}

We now claim that $s_i <\pi/2$ for all $i.$ Indeed, if not, assume that $s_1\geq \pi/2$
and note that $s_j < \pi/2$ for $j \geq 2$
(because, being the intervals disjoint and the contact set nonempty, $\sum_i s_i< \pi$). Consider the function 
$$
g(z) := z\tan^2(z) - 2(\tan(z) - z),
$$
so that the denominator in (\ref{LinearConstraint}) is given by $2\pi - \sum_{i} g(s_i)$. 

It is easy to check that
$g \geq 0$ on $(0,\,\pi)$, and that $g$ is decreasing on $(\pi/2,\,\pi]$ with $g(\pi) = 2\pi$. In particular, $g(s_1) > 2\pi$.
Thus we have
\begin{align*}
\Lambda^2 &= \frac{s_1 - \tan(s_1) -2\pi -\sum_{i\geq 2}[\tan(s_i)-s_i]}{g(s_1)-2\pi +\sum_{i\geq 2} g(s_i) } \\
&\leq \frac{s_1-\tan(s_1)-2\pi }{g(s_1)-2\pi } \\
&\leq 1, 
\end{align*}
where we used that $\tan(s_i)>s_i$ for $i > 2$, $g(s_i)\geq 0$,
and that the function
$$s \mapsto \frac{s-\tan(s)-2\pi }{g(s) -2\pi }$$
is bounded by $1$ on $[\pi/2,\pi)$. This contradicts that $\Lambda > 1$, proving the claim.

\vspace{3mm}

Using (\ref{LinearConstraint}) again, we can now improve the bound on $s_i$. Since all $s_i$ are less than $\pi/2$, the numerator in the expression for $\Lambda^2$ is positive, therefore so is the denominator. In particular, this implies that $\sum_ig(s_i) < 2\pi$.
Since $g$ is increasing on $(0,\, \pi/2)$
and $g(1.225)>2\pi$, we have
\begin{equation}
\label{eq:s i bdd}
g(s_i) \leq 2\pi \quad \Rightarrow \quad s_i \leq s_c,\qquad \forall\,i,
\end{equation}
where $s_c \in (0, 1.225)$ is the unique point such that $g(s_c)=2\pi$.


\vspace{3mm}

The computations so far only used that $h$ and $\kappa$ solve the linear problem. We now bring in the energy minimality. Using
the formula for $\kappa$ in \eqref{eq:formulas k h} we obtain
$$\int_{\mathbb S^1}\kappa^2ds = 2\pi+\sum_i (E_i-2s_i),$$
where
$$E_i:=\tan(s_i)+s_i[1+\Lambda^2 \tan^2(s_i)].$$
The minimization problem can thus be rewritten as
$$\min \biggl\{\sum_i \bigl[(\tan(s_i)-s_i)+\Lambda^2 s_i\tan^2(s_i)\bigr]:\text{\eqref{eq:relation s L i} and \eqref{LinearConstraint} hold}\biggr\}.$$
Using the constraint (\ref{LinearConstraint}) to rewrite the term $s_i\tan^2(s_i)$, we get that the problem is equivalent to minimizing the energy
\begin{equation}\label{EnergyFormula}
\mathcal E:=\Lambda^2\bigl(\pi+\sum_i(\tan(s_i)-s_i)\bigr)
\end{equation}
under \eqref{eq:relation s L i} and \eqref{LinearConstraint}.
\vspace{3mm}

We now want to analyze better the constraint \eqref{eq:relation s L i}. To this aim, we note that
the relation
$$\frac{\tan(s)}{s}=\frac{\tan(\Lambda s)}{\Lambda s},\qquad \Lambda > 1,$$
gives, for any $s \in (0,\pi)$, a sequence of solutions
$$1 < \Lambda_{1,s} < \Lambda_{2,s} < \ldots$$
These are found by imposing
$$\tan(s\,\Lambda_{j,s})-\Lambda_{j,s}\tan(s)=0, \qquad s\,\Lambda_{j,s} \in (j\pi,(j+1)\pi),$$
(see Figure \ref{C21Constraint}).
 \begin{figure}
 \centering
    \includegraphics[scale=0.25]{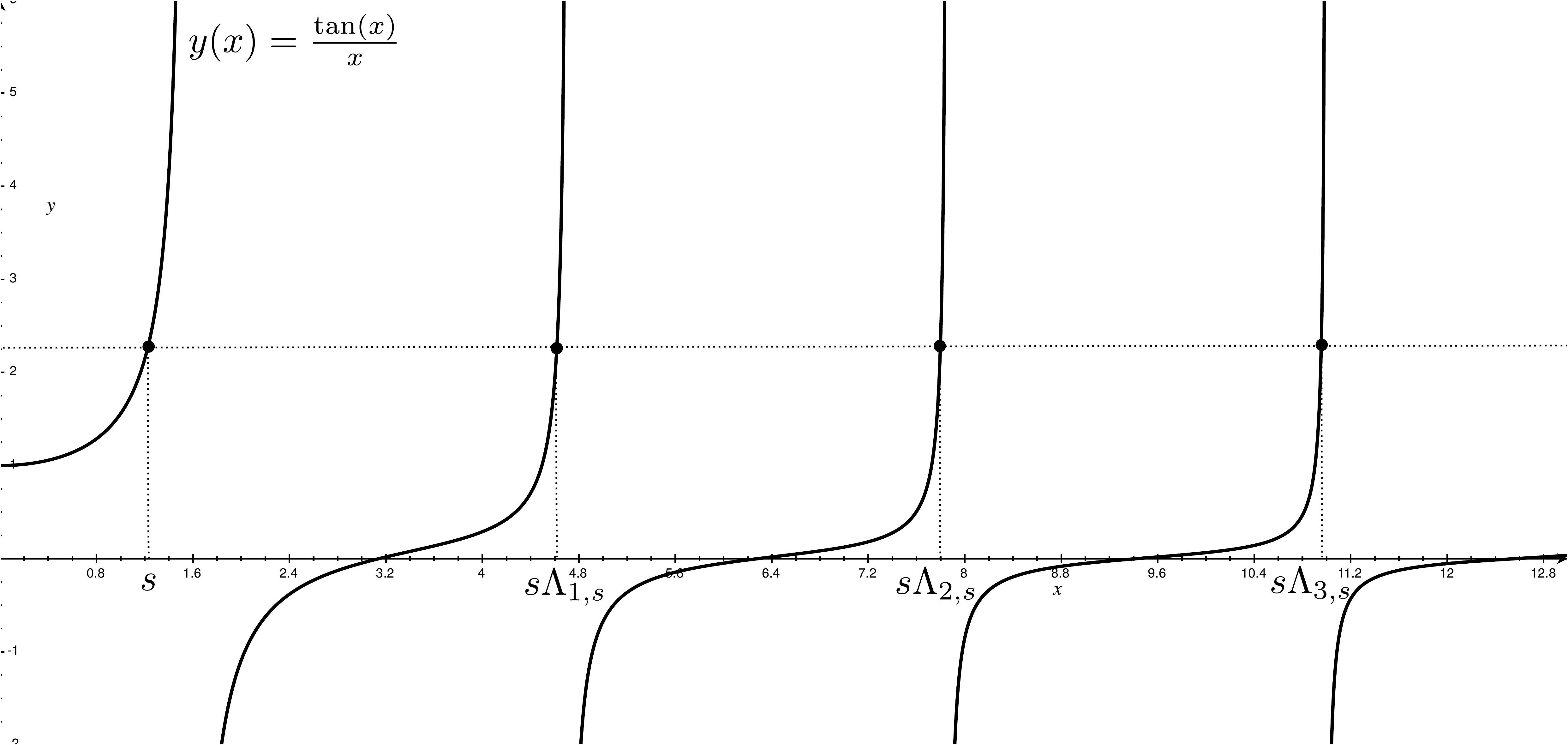}
 \caption{For $y(x) = \frac{\tan(x)}{x}$, the functions $\Lambda_{j,s}$ are found by imposing $y(s\Lambda_{j,s}) = y(s)$.}
 \label{C21Constraint}
\end{figure}
By implicitly differentiating, we see that the functions $s \mapsto \Lambda_{j,s}$ are strictly decreasing on $(0,\pi)$. Furthermore, $\Lambda_{j,s}\to \infty$ as $s\to 0$.

\vspace{3mm}

Using these observations we estimate the minimal energy from above. To do so we consider the case that $\{h > 1\}$ consists of one interval.

We first estimate the length of this interval. Recall that, by \eqref{eq:s i bdd}, we can restrict to the range $s \in (0,s_c)$. Because
$$s\mapsto \mathcal L(s):=\frac{2\pi +[\tan(s)-s]}{2\pi -[s\tan^2(s)-2(\tan(s)-s)] } \quad \text{is increasing on $(0,s_c)$},$$
$$s\mapsto \Lambda_{1,s} \quad \text{is decreasing on $(0,s_c)$},$$
$$\text{$\Lambda_{1,s}\to +\infty$ as $s\to 0^+$,}\qquad \text{$\mathcal L(s)\to +\infty$ as $s\to s_c^-$,}$$
there exists a unique point $\hat s_1<s_c < 1.225$ such that
$$\mathcal L(\hat s_1)=\Lambda^2_{1,\hat s_1}.$$
Note that for $x > 0$ the function $\frac{\tan{x}}{x}$ is increasing on each interval $((k-1/2)\pi,\, (k + 1/2)\pi)$. Since
$$ \frac{\tan(3.81*1.21)}{3.81 * 1.21} - \frac{\tan(1.21)}{1.21} <0,
\qquad \frac{\tan(3.82*1.21)}{3.82 * 1.21} - \frac{\tan(1.21)}{1.21} >0,$$
we conclude that $3.81\leq \Lambda_{1,1.21}\leq 3.82$. Also, since
$\mathcal L(1.21)<(3.81)^2$, we have
 $\mathcal L(1.21)<\Lambda_{1,1.21}^2$
which yields $\hat s_1 \geq 1.21$ and $\Lambda_{\hat s_1}\leq \Lambda_{1,1.21}\leq 3.82$.

On the other hand
$$ \frac{\tan(3.79*1.215)}{3.79 * 1.215} - \frac{\tan(1.215)}{1.215} <0,
\qquad \frac{\tan(3.8*1.215)}{3.8 * 1.215} - \frac{\tan(1.215)}{1.215} >0,
$$
together with $\mathcal L(1.215)>(3.8)^2$,
implies that $\hat s_1 \leq 1.215$ and $\Lambda_{\hat s_1}\geq 3.79$.
Thus we proved that
\begin{equation}
\label{eq:hat s1}
\hat s_1 \in (1.21,1.215)\quad \text{and}\quad \Lambda_{\hat s_1}\in (3.79,3.82)
\end{equation}

These estimates give in particular an upper bound for the minimal energy:
$$\min \mathcal E \leq \Lambda_{\hat s_1}^2[\pi +\tan(\hat s_1) - \hat s_1]\leq  (3.82)^2[\pi + \tan(1.215) - 1.215]\leq 67.4.$$

\vspace{3mm}

Using the energy bound we now get an upper bound for $\Lambda$ for a minimizer. Indeed, the expression (\ref{EnergyFormula}) gives (note that $\tan(s_i)\geq s_i$ since $s_i <\pi/2$)
\begin{equation}
\label{eq:upper lambda}
\Lambda^2 \pi < \min \mathcal E \leq 67.4 \quad  \Longrightarrow \quad \Lambda \leq 4.64.
\end{equation}
Recall now that $s_i \leq 1.225$ for all $i$.
Since 
$$
\frac{\tan(3.75*1.225)}{3.75 * 1.225} - \frac{\tan(1.225)}{1.225} <0,\qquad
\frac{\tan(6.35*1.225)}{6.35 * 1.225} - \frac{\tan(1.225)}{1.225} <0,
$$
$$
3.75*1.225\in (\pi,2\pi),\qquad 6.35*1.225 \in (2\pi,3\pi),
$$
we conclude that, for all $i$, 
\begin{equation}
\label{eq:lower lambda}
\Lambda_{1,s_i}\geq \Lambda_{1,1.225}\geq 3.75,
\end{equation}
$$\Lambda_{2,s_i}\geq \Lambda_{2,1.225}\geq 6.35.$$
Since $4.64 < \Lambda_{2,1.225}$ (see \eqref{eq:upper lambda}), we deduce that $\Lambda=\Lambda_{1,s_i}$ for all $i$.
Since $\Lambda_{1,s}$ is strictly decreasing,
this implies that there are a finite number $N$
of folds with identical length: $s_1=s_2=\ldots=s_N=\bar s$. Thus (\ref{LinearConstraint}) reads
$$\Lambda^2=\frac{2\pi +N[\tan(\bar s)-\bar s]}{2\pi -N[\bar s\tan^2(\bar s)-2(\tan(\bar s)-\bar s)] }.$$
Since $N\bar s< \pi$, this gives
$$\Lambda^2 \leq \frac{1+\tan(\bar s)/\bar s}{2\tan(\bar s)/\bar s-\tan(\bar s)^2} \leq \frac{2}{2 - \bar s\tan(\bar s)}$$
that combined with the lower bound $\Lambda \geq 3.75$ (see \eqref{eq:lower lambda}) yields $\bar s >\pi/3$, so this immediately gives $N=1$ or $N=2$.

In the computation above we have shown that, if we consider one single fold, then we can make the energy lower than $67.4$.
We now want to prove that $N=2$ is energetically less efficient.
\vspace{3mm}

Observe that $\frac{\tan(x)}x > 1$ on $(0,\, \pi/2)$, and is increasing on $(\pi/2,\, 3\pi/2).$ Since $\tan(1.43\pi)/(1.43\pi) < 1$, we conclude that 
$$\Lambda s > 1.43 \pi.$$
Because of this, in the case $N = 2$ we have that the energy is at least
$$ \min_{s \in (\pi/3,1.225)}\left(\frac{1.43 \pi}{s}\right)^2[\pi + 2(\tan(s) - s)].
$$
To provide a lower bound on the above quantity, we check that at the end points it is larger than $80$.
Also, if there is a critical point $s_0 \in (\pi/3,1.225)$, then at such a point we have (since the first derivative vanishes) $\pi + 2(\tan(s_0) - s_0) = s_0\tan^2(s_0)$, so the energy at $s_0$ is $(1.43\pi)^2 (\tan^2(s_0)/s_0)$. The critical
point happens for $s > 1.13$ by a simple computation (the right side in the critical point condition has larger derivative than left side, and the difference changes sign between $1.13$ and $1.14$). 
Thus, since $\frac{\tan^2(s)}{s}$ is increasing, we deduce that the energy at a critical point is at least $(1.43\pi)^2(\tan(1.13))^2/(1.13) > 80.$

Since $80 > 67.4$, this shows that the case $N=2$ has higher energy than $N=1$. We conclude that, for a minimizer, $\{h > 1\}$ consists of exactly one interval of length $2\hat s_1$. Furthermore, thanks to \eqref{eq:hat s1},
$$2.42 \leq 2\hat s_1 \leq 2.43,$$
completing the proof.
\end{proof}

\subsection{Proof of Theorem \ref{MainTheorem2}}
By combining the $C^{2,1}$ estimate with Proposition \ref{LinearMinimizers}, we prove Theorem \ref{MainTheorem2}.

\begin{proof}[{\bf Proof of Theorem \ref{MainTheorem2}}]
We proved the $C^{2,1}$ estimate in Section \ref{C21Estimate}.
Recall that, as a result of this estimate, as $\epsilon \rightarrow 0$, there is a subsequence of $\epsilon_k^{-1}h_{\epsilon_k},\, \epsilon_k^{-1}\kappa_{\epsilon_k}$ (corresponding
to minimizers $\gamma_{\epsilon_k}$ of $F$ in $ Y_{\epsilon_k}$) that converge respectively in $C^2$ and $C^0$ to a solution $h,\, \kappa$ of the linear problem (\ref{PreciseLinearProblem}).

\vspace{3mm}
We first claim that this limit is a minimizer of $\int_{\mathbb{S}^1} \kappa^2\,ds$.
By strong $C^2$ convergence it is clear that $\lim_{k \rightarrow \infty} \epsilon_k^{-2}F(\gamma_{\epsilon_k})$ is at least the minimal energy for the linear problem. If $\lim\sup_{k \rightarrow \infty} \epsilon_k^{-2}F(\gamma_{\epsilon_k})$ is 
larger than the minimal energy for the linear problem,
then by using the linear minimizer and making arbitrarily small perturbations to satisfy the length constraint, we get a competitor of $\gamma_{\epsilon_k}$ with smaller energy, which would give a contradiction.
(More precisely, if $h$ is the minimizer of the linearized problem, create a competitor by perturbing the curve
$(1-\epsilon_k^2h(\theta)^2)^{1/2}(\cos(\theta)\, e_1 + \sin(\theta)\, e_2) + \epsilon_k h(\theta) e_3.$ A short computation shows that to satisfy the length constraint we need to make a perturbation to $h$ of size $\epsilon_k^2$
in $C^2$. The curvature of the competitor is then $\epsilon_k \kappa + O(\epsilon_k^2),$ so for $k$ large the energy of the competitor is smaller than that of the minimizer $\gamma_{\epsilon_k}$.)
Thus $\epsilon_k^{-1} h_{\epsilon_k}$ (resp. $\epsilon_k^{-1} \kappa_{\epsilon_k}$) converge in $C^2$ (resp. $C^0$) to a minimizer of the linearized problem.

\vspace{3mm}

By Proposition \ref{BumpLength}, for $\epsilon < \epsilon_0$ small, the intervals that comprise $\{\epsilon^{-1}h_{\epsilon} > 1\}$ all have length at least $c_0 > 0$. It follows from
the convergence of $\epsilon^{-1} h_{\epsilon}$ (resp. $\epsilon^{-1} \kappa_{\epsilon}$) in $C^2$ (resp. $C^0$) to a minimizer of the linear problem and Proposition \ref{LinearMinimizers}
that the sets $\{\epsilon^{-1}h_{\epsilon} > 1\}$ consist of exactly one interval that converges in the Hausdorff distance to the separation
interval for a linear minimizer. This completes the proof.
\end{proof}


\section{Appendix}
\subsection{Derivation of Euler-Lagrange Equation}
For a curve on the sphere of length $2\pi$, let $\gamma$ be a unit-speed parametrization. We define the unit normal $N$ to the cone over $\gamma$ by
$$N = \gamma \times \gamma'.$$
Easy computations give that
\begin{equation}\label{CurvatureRelation}
\gamma'' = -\gamma + \kappa N, \qquad \gamma'' \cdot N = \kappa.
\end{equation}

If $\bar{\gamma}$ is a curve on the sphere, but not parametrized by arc length, the above formula can be used to derive the geodesic curvature $\bar{\kappa}$ of $\bar{\gamma}$:
\begin{equation}\label{CurvatureFormula}
\bar{\kappa} = \frac{\bar{\gamma}'' \cdot (\bar{\gamma} \times \bar{\gamma}')}{|\bar\gamma'|^3}.
\end{equation}

Now let $\psi \in W^{2,2}(\mathbb{S}^1)$ satisfy $\psi \cdot \gamma = 0$. Using  (\ref{CurvatureFormula}), we compute the geodesic curvature $\kappa_{\delta}$
of the projection of $\gamma + \delta \psi$ to $\mathbb{S}^2$:
$$\kappa_{\delta} = \kappa + \delta\bigl(\psi'' \cdot N - 3\kappa(\psi' \cdot \gamma') + \gamma'' \cdot (\psi \times \gamma' + \gamma \times \psi')\bigr) + O(\delta^2).$$ 
Using the vector identity ${\bf a} \cdot ({\bf b} \times {\bf c}) = {\bf b} \cdot ({\bf c} \times {\bf a})$ we conclude that
$$ \kappa_{\delta}^2\,ds_{\delta} = \left(\kappa^2 + 2\delta\left(\kappa N \cdot \psi'' - \frac{3}{2}\kappa^2 \gamma' \cdot \psi' + \kappa N \cdot \psi \right) + O(\delta^2)\right)\,ds, $$
where $ds_\delta$ is the length element of $\gamma_\delta:=\frac{\gamma+\delta\psi}{|\gamma+\delta \psi|}$, that is 
$$ds_{\delta} = (1 + \delta \gamma' \cdot \psi' + O(\delta^2))ds.$$
Thus, the first-order change in $\int \kappa^2\,ds$ is given by
$$\int_{\mathbb{S}^1} \left(\kappa N \cdot \psi'' - \frac{3}{2}\kappa^2 \gamma' \cdot \psi' + \kappa N \cdot \psi \right)\,ds.$$


\subsection{Lagrange Multiplier}
We can remove the length constraint $\int_{\mathbb{S}^1} \kappa N \cdot \psi \,ds = 0$ by introducing a Lagrange multiplier $\lambda$.

Let $\gamma$ be a unit-speed minimizer of $F$ in $\tilde Y_{\epsilon}$. Let $J$ be the contact set $\{\gamma \cdot e_3 = \epsilon\}$ of the minimizer with the obstacle. Let
$$Z  := \{\varphi - (\varphi \cdot \gamma) \gamma: \varphi \in W^{2,2}(\mathbb{S}^1; \mathbb{R}^3)\}$$
be the subspace of $W^{2,2}(\mathbb{S}^1; \mathbb{R}^3)$ that is tangent to $\mathbb{S}^2$ on $\gamma$. Let $\psi_{\varphi}$ denote an element of $Z$ generated by $\varphi$.
Finally, let
$$K := \{\psi \in Z: \psi \cdot e_3 \geq 0 \text{ on } J\}.$$

Note that $K$ is convex. Furthermore, $K - K= Z$. Indeed, since $\|\gamma \cdot e_3\|_{C^1(\mathbb{S}^1)} \leq C\epsilon$, we have for $\varphi = Ae_3$ with $A$ large that $\psi_{\varphi} \cdot e_3 \sim A >> 1$. Thus, for any
$\tilde{\psi} = \psi_{\tilde{\varphi}} \in Z$ we have for $A$ large that $(\psi_{\varphi} - \psi_{\tilde{\varphi}}) \cdot e_3 = \psi_{\varphi - \tilde{\varphi}} \cdot e_3 > 0$.

For $\psi \in Z$, let $\kappa_{\psi}$ be the geodesic curvature of $\frac{\gamma + \psi}{|\gamma+\psi|}$, with arc length parameter $s_{\psi}$, and let
$$G(\psi) := \int_{\mathbb{S}^1} \kappa_{\psi}^2\,ds_{\psi},\qquad 
H(\psi) := \int_{\mathbb{S}^1} ds_{\psi} - 2\pi.$$
By construction, $0\in Z$ is a local minimizer of the variational problem
$$\min\{G(\psi): \psi \in K,\, H(\psi) = 0\},$$
and by the above computations both $G$ and $H$ are Fr\'{e}chet differentiable in a neighborhood of $0$.
Thus, we can apply \cite[Theorem 4]{O} (taken from \cite{DMO}) to conclude that there exists some $\lambda \in \mathbb{R}$ such that
$$DG(0)[\psi] + \lambda DH(0)[\psi] \geq 0\qquad \forall\,\psi \in K.$$
By the above computations, this becomes
$$\int_{\mathbb{S}^1} \left(\kappa N \cdot \psi'' - \frac{3}{2} \kappa^2 \gamma' \cdot \psi' + (1 + \lambda)\kappa N \cdot \psi\right)\,ds \geq 0$$
for all $\psi \in K$ and some $\lambda \in \mathbb{R}$.

\section*{Acknowledgements}
Both authors were supported by the ERC grant ``Regularity and Stability in Partial Differential Equations (RSPDE)''. C. Mooney was supported in part by National Science Foundation grant DMS-1501152.
We would like to thank Francesco Maggi for many valuable discussions.



\end{document}